\documentclass[12pt,a4paper,oneside]{amsart}
\usepackage[utf8]{inputenc}


\usepackage{amsmath,amscd,hyperref}
\usepackage{amsfonts}
\usepackage{amssymb}
\usepackage{amsthm}
\usepackage{cite}
\usepackage[margin=1in]{geometry}
\usepackage{mathtools}
\DeclarePairedDelimiter{\abs}{\lvert}{\rvert}
\usepackage{footnote}
\usepackage{colonequals}

\DeclareMathOperator{\im}{\mathrm{Im}}
\numberwithin{equation}{section}

\theoremstyle{plain} \newtheorem{thm}{Theorem}[section]
\newtheorem{lemma}[thm]{Lemma}
\newtheorem{prop}[thm]{Proposition}
\newtheorem{cor}[thm]{Corollary}

\theoremstyle{definition}
\newtheorem{remark}[thm]{Remark}

\newtheorem{definition}[thm]{Definition}
\newtheorem{set-up}[thm]{Set-Up}

\newtheorem{problem}[thm]{Problem}

\newtheorem*{acknowledgements*}{Acknowledgements}

\DeclareMathOperator{\rank}{rank}

\DeclareMathOperator{\Lie}{Lie}
\newcommand{\C}{\mathbb{C}}
\renewcommand{\H}{\mathcal{H}}
\newcommand{\V}{\mathcal{V}}
\newcommand{\Z}{\mathbb{Z}}
\newcommand{\R}{\mathbb{R}}
\newcommand{\G}{\mathbf{G}}
\newcommand{\g}{\mathfrak{g}}
\newcommand{\M}{\mathbf{M}}
\newcommand{\p}{\mathfrak{p}}
\newcommand{\Id}{\mathrm{Id}}
\newcommand{\norm}[1]{\left\lVert#1\right\rVert}

\newcommand{\Hol}{\mathrm{Hol}}
\newcounter{tmp}

\begin{document}

\title{$L^2$-minimal extensions over Hermitian symmetric domains}
\author{Ruijie Yang}
\address{Department of Mathematics, Stony Brook University, Stony Brook, NY 11794}
\email{ruijie.yang@stonybrook.edu}

\begin{abstract}
In this paper, we study the $L^2$-minimal extension problem for polarized variations of Hodge structures over Hermitian symmetric domains.  We are able to explicitly find the $L^2$-minimal extensions using a group-theoretic construction. In particular, this gives a construction without using $L^2$-estimates as in the Ohsawa-Takegoshi type extension theorems. The key ingredient is the Harish-Chandra embedding of Hermitian symmetric domains.

The construction of holomorphic sections might be of independent interest since it gives a concrete description in the setting of Hermitian VHS.
\end{abstract}

\maketitle

\thispagestyle{empty}

\section*{Introduction}
The purpose of this paper is to study Ohsawa-Takegoshi type extension theorems in the context of Hodge theory. Specifically, we construct $L^2$-minimal extensions for the smallest pieces of polarized variations of Hodge structures over Hermitian symmetric domains using a group-theoretic construction. This is a continuation of \cite{SY20}, where the authors prove some metric positivity results for the smallest pieces of Hodge modules over any complex manifold. In this paper, we focus on Hermitian VHS and we are able to prove a Ohsawa-Takegoshi type theorem without using $L^2$-estimates.

Turning to a more detailed description, let $D$ be a Hermitian symmetric domain and let $\V$ be a Hermitian VHS on $D$ (  Definition \ref{def:Hermitian VHS}). Let $E$ be the smallest nonzero piece of $\V$. Our main result says that $L^2$-minimal extensions of $E$ is governed by group actions on $D$.

\begingroup
\setcounter{tmp}{\value{thm}}
\setcounter{thm}{0} 
\renewcommand\thethm{\Alph{thm}}
\begin{thm}\label{thm:main} 
Let $o\in D$ be a reference point and let $v_o \in E_o$ be a vector. 
\begin{enumerate}

\item There is a holomorphic section $\sigma \in H^0(D,E)$ constructed group-theoretically such that $\sigma(o)=v_o$.

\item The $L^2$-norm
\[ \norm{\sigma}^2 \colonequals \int_{D} h_E(\sigma,\sigma) d\mu \]
is minimal among all possible holomorphic extensions of $v_o$. Here $h_E$ is the Hodge metric on $E$ and $d\mu$ is the standard Lebesgue measure on $D$ induced via the Harish-Chandra embedding.

\end{enumerate}

\end{thm}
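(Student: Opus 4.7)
The plan is to exploit the group-theoretic structure coming from the Harish-Chandra embedding of $D$ and the extension of $E$ to a homogeneous bundle on the compact dual. Write $\g_{\C} = \p^- \oplus \mathfrak{k}_{\C} \oplus \p^+$ for the Harish-Chandra decomposition at $o$, let $P \subset \G_{\C}$ be the parabolic subgroup with Lie algebra $\mathfrak{k}_{\C}\oplus \p^-$, and set $\check{D} = \G_{\C}/P$. Since $E$ is the smallest piece of a Hermitian VHS, its fiber $E_o$ is a $K$-representation on which $\p^-$ acts trivially (reflecting that $E$ lies at the bottom of the Hodge filtration), so the $K$-action extends to a holomorphic representation of $P$ and produces a $\G_{\C}$-equivariant holomorphic bundle $\check{E}$ on $\check{D}$ whose restriction to $D\subset \check{D}$ is $E$. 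The map $\Phi(Z,v) = [\exp(Z),v]$ gives a holomorphic trivialization of $\check{E}$ over the open Harish-Chandra cell $\exp(\p^+)\cdot o$, which contains $D$. I then define $\sigma$ to be the ``constant section $v_o$'' in this trivialization: $\sigma(\exp(Z)\cdot o) := \Phi(Z,v_o)$. This is holomorphic and satisfies $\sigma(o) = v_o$, proving part (1).

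For the $L^2$-minimality in part (2), the strategy is to show that $\sigma$ is orthogonal in $H^0_{(2)}(D,E)$ to every $\tau$ with $\tau(o) = 0$; this characterizes $\sigma$ as the unique minimal-norm extension of $v_o$. In the trivialization $\Phi$, the Hodge metric takes the form $h_E(\Phi(Z,u),\Phi(Z,v)) = \langle A(Z,\bar{Z})u,v\rangle_{E_o}$ for a real-analytic, $K$-equivariant automorphy factor $A$ with $A(0,0) = \Id$. Expanding $\tau$ as a Taylor series $\tau(Z) = \sum_{|\alpha|\geq 1} c_\alpha Z^\alpha$ in the trivialization (with $c_\alpha\in E_o$), the inner product reduces to
\[ \langle \sigma,\tau\rangle_{L^2} \;=\; \sum_{|\alpha|\geq 1}\Bigl\langle I_\alpha v_o,\, c_\alpha\Bigr\rangle_{E_o}, \qquad I_\alpha := \int_D \bar{Z}^\alpha A(Z,\bar{Z})\,d\mu(Z). \]
The invariance of $D$ and $d\mu$ under the central $U(1)\subset K$ scaling $Z\mapsto e^{i\theta}Z$ already restricts $I_\alpha$ to specific bi-degree components of $A$, and I will use the full $K$-equivariance of $A$ to conclude $I_\alpha v_o = 0$ for all $|\alpha|\geq 1$.

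The construction is essentially formal once the Harish-Chandra framework is set up; the real difficulty lies in the orthogonality argument. The main obstacle will be obtaining a sufficiently explicit description of the automorphy factor $A(Z,\bar{Z})$, which I expect to achieve by identifying it with a composition of the $\G_{\C}$-action of $\exp(Z)$ with the adjoint of $\exp(\bar{Z})$ on $E_o$, and then decomposing the resulting formal series into $K$-isotypic components. The smallest-piece hypothesis on $E$ should be decisive here, constraining which $K$-types appear in $A(Z,\bar{Z})v_o$ so that the monomial integrals $I_\alpha v_o$ vanish for all $|\alpha|\geq 1$ and the orthogonality conclusion goes through.
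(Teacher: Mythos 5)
Your part (1) is the paper's construction re-expressed in the language of homogeneous bundles on the compact dual: the section $[\exp(Z),v_o]$ in your trivialization is exactly $\rho(\mathrm{Exp}(Z))(v_o)$, which is how the paper defines $\sigma$ (Lemma \ref{lem:holomorphic sections}, Proposition \ref{prop:equivariant sections}); your observation that $\p^{-}=\g^{1,-1}_\C$ annihilates $E_o$ is Lemma \ref{lem: shifting of (p,q)}, and the descent through $P=K_\C P_-$ is the factorization $g=\zeta(g)k_\C p_-$ used there. Likewise your reduction of part (2) to the vanishing of the moments $I_\alpha v_o$ is the paper's Lemma \ref{lem:minimal criterion} combined with the Taylor expansion of a competitor section, so the route is essentially the paper's.

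The one genuine gap is the step you explicitly defer as the ``main obstacle'': the vanishing $I_\alpha v_o=0$. What is actually needed is not an explicit formula for $A(Z,\bar Z)$ nor its full $K$-isotypic decomposition, but the single identity $A(u(e^{i\theta})\cdot Z)=A(Z)$ for the circle $u:U(1)\to K$ of Lemma \ref{lem:associated objects}; granted this, the change of variables $Z\mapsto e^{2i\theta}Z$ (note that $u(e^{i\theta})$ acts on $\g^{-1,1}_\C$ by $e^{2i\theta}$, not $e^{i\theta}$) gives $I_\alpha=e^{-2i\abs{\alpha}\theta}I_\alpha$ and hence $I_\alpha=0$ outright, with no further use of $K$. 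This identity is not formal: equivariance alone only yields $\langle A(k\cdot Z)\rho(k)u,\rho(k)v\rangle=\langle A(Z)u,v\rangle$, and to cancel the $\rho(k)$'s you need (a) that $h_E$ on $E=\V^{p,q}$ is induced by a $K$-\emph{invariant} positive Hermitian form on $E_o$, and (b) that $u(e^{i\theta})$ acts on $E_o=V^{p,q}_h$ by the unimodular scalar $e^{i(q-p)\theta}$. Point (a) is where the real work lies: the paper obtains it from Milne's theorem on $C$-polarizations with $C=h(-i)$ (Propositions \ref{prop:C-polarization} and \ref{prop:polarization}), which produces a flat $\G_\R$-invariant pairing $S$ with $(-1)^pS^h$ positive definite and $K$-invariant on $E_o$; Corollary \ref{cor:polarization is $S^1$-invariant} then packages (a) and (b) into exactly the invariance you need. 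Two smaller items you should also supply: the interchange of $\sum_\alpha$ with $\int_D$ requires justification (the paper works on $S^1$-invariant compact exhaustions of $D$ and passes to the limit using $(fe_j,\sigma)_{L^2}<\infty$), and you must check $\norm{\sigma}^2<\infty$, which holds because $d\rho(Z)$ is nilpotent, so $h_E(\sigma,\sigma)$ is polynomial in $Z,\bar Z$ and extends continuously to the closure of the bounded domain.
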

\endgroup

Our result applies to variations of Hodge structures coming from geometry. Specifically, let $D$ be a Hermitian symmetric domain and let $\pi:X\to D$ be a smooth projective family with $n$-dimensional fibers such that $R^n\pi_{\ast}(\underline{\C})$ is a Hermitian VHS (see \cite[\S 2.4]{FL}). Note that $\pi_{\ast}(\omega_{X/D})$ is the smallest piece of $R^n\pi_{\ast}(\underline{\C})$ if it is nonzero.

\begingroup
\setcounter{tmp}{\value{thm}}
\setcounter{thm}{1} 
\renewcommand\thethm{\Alph{thm}}
\begin{cor}\label{thm:geometric case}
Let $o\in D$ be a reference point and let $v_o$ be a holomorphic $(n,0)$-form on $X_o\colonequals \pi^{-1}(o)$. 
\begin{enumerate}

\item There is a holomorphic relative $(n,0)$-form $\sigma \in H^0(D,\pi_{\ast}(\omega_{X/D}))$ constructed as in Theorem \ref{thm:main} such that $\sigma(o)=v_o$.

\item Set $c_{n}=2^{-n}(-1)^{n^2/2}$. Then the $L^2$-norm 
\[ \norm{\sigma}^2 \colonequals \int_{D} \left(c_{n} \int_{X_t} \sigma(t)\wedge \overline{\sigma(t)}\right) d\mu \]
is minimal among all possible holomorphic extensions of $v_o$, where $d\mu$ is the standard Lebesgue measure on $D$ induced via the Harish-Chandra embedding.
\end{enumerate}
\end{cor}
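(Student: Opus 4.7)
The plan is to derive Corollary~\ref{thm:geometric case} as a direct specialization of Theorem~\ref{thm:main}. The excerpt already points out that, when $R^{n}\pi_{\ast}(\underline{\C})$ is a Hermitian VHS of weight $n$ on $D$, the sheaf $\pi_{\ast}(\omega_{X/D})$ is its smallest nonzero Hodge piece (it is $F^{n}$, identified with $\pi_{\ast}(\omega_{X/D})$ via the degeneration of the relative Hodge-to-de Rham spectral sequence). This identifies the abstract $E$ of Theorem~\ref{thm:main} with $\pi_{\ast}(\omega_{X/D})$ in a canonical way, and part~(1) of the corollary is then just part~(1) of the theorem applied to the vector $v_{o} \in E_{o} = H^{0}(X_{o}, \omega_{X_{o}})$; the constructed section $\sigma$ is the same object in both statements.

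For part~(2), the only additional work is to match the Hodge metric $h_{E}$ with the fiberwise pairing appearing in the display. This is a pointwise statement at each $t \in D$: for a holomorphic $n$-form $\alpha$ on $X_{t}$ one needs
\[ h_{E}(\alpha,\alpha) \;=\; c_{n} \int_{X_{t}} \alpha \wedge \overline{\alpha}, \qquad c_{n} = 2^{-n}(-1)^{n^{2}/2}. \]
This is the classical description of the polarization of $R^{n}\pi_{\ast}(\underline{\C})$ restricted to $H^{n,0}$: the polarization is cup product followed by integration over the fiber, and on a pure type $(n,0)$-class the formula reduces to the one above. A short local computation with $\alpha = dz_{1} \wedge \cdots \wedge dz_{n}$ confirms that $c_{n}$ is precisely the unique constant that turns $\alpha \wedge \overline{\alpha}$ into the real Lebesgue volume form in the given holomorphic chart, which is also the chart induced on $D$ by the Harish-Chandra embedding.

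With this identification, the integrands of Theorem~\ref{thm:main}(2) and Corollary~\ref{thm:geometric case}(2) coincide, and hence so do the $L^{2}$-norms. The $L^{2}$-minimality of $\sigma$ among all holomorphic extensions of $v_{o}$ therefore follows verbatim from Theorem~\ref{thm:main}(2). I expect no conceptual obstacle beyond the bookkeeping needed to reconcile the normalization of the polarization on $R^{n}\pi_{\ast}(\underline{\C})$ coming from integration over the fiber with the Hodge metric used in Theorem~\ref{thm:main}; once the sign and power-of-$2$ conventions are lined up, the computation yielding $c_{n} = 2^{-n}(-1)^{n^{2}/2}$ is routine.
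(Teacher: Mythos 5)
Your proposal is correct and matches the paper's (largely implicit) treatment: the corollary is obtained by specializing Theorem \ref{thm:main} to the Hermitian VHS $R^n\pi_{\ast}(\underline{\C})$ with smallest piece $\pi_{\ast}(\omega_{X/D})$, and by identifying the Hodge metric with the fiberwise pairing $c_n\int_{X_t}\alpha\wedge\overline{\alpha}$ exactly as in \S\ref{sec:norms of canonical forms}. The only remark worth adding is that even if the polarization-induced Hodge metric and the $c_n$-normalized fiber integral differ by a fixed positive constant, minimality is unaffected, so the conclusion is insensitive to that bookkeeping.
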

\endgroup

Let us illustrate our construction in an example. Let $\pi:X \to \H$ be the universal family of elliptic curves over the upper half plane. Let $i\in \H$ be a reference point. Let $X_i=\C/(\mathbb{Z} + i \mathbb{Z})$ be the standard elliptic curve and let $v_i=dz$ be the holomorphic $1$-form on $X_i$. Then our construction (see \S \ref{sec:elliptic curve example}) says that the $L^2$-minimal extension of $v_i$ is a holomorphic $1$-form $\sigma$ where
\[ \sigma(\tau)=\frac{2i}{\tau+i}(dz)_\tau. \]
Here $\tau \in \H$ and $(dz)_\tau$ is the holomorphic $1$-form on $X_\tau=\C/(\mathbb{Z} + \tau \mathbb{Z})$ descended from $\C$. One can do similar calculations for universal families of principally polarized abelian varieties over the Siegel upper half space $\H_g$.


The proof of Theorem \ref{thm:main} utilizes symmetries of $D$. Given $v_o\in E_o$, the holomorphic section $\sigma$ is constructed using $\mathrm{Exp}(\g^{-1,1}_\C)$ on the image of $D$ under the Harish-Chandra embedding (Proposition \ref{prop:equivariant sections}), where $\g^{-1,1}_\C$ is the $(-1,1)$ part of the Hodge decomposition of $\g_\C =\Lie \G_\C$. By the standard variational method, the $L^2$-minimality of $\sigma$ follows from some vanishing of integrals over $D$, which is deduced by studying the $S^1$-actions on $D$.

The construction of holomorphic sections of the smallest piece of Hermitian VHS seems to be new and it might be useful for other situations if one need a more concrete description of holomorphic sections.

Let us discuss the connection between our results and Ohsawa-Takegoshi $L^2$-extension type theorems, which was the starting point of our project. Let $B$ be a complex unit ball and let $(E,h_E)$ be a holomorphic vector bundle on $B$ with a Nakano semipositive smooth Hermitian metric $h_E$. The \textit{sharp} Ohsawa-Takegoshi theorem recently proved by Blocki and Guan-Zhou \cite{Blocki,GZ} says that if $o\in B$ is a reference point, for any $v_o \in E_o$, there is a holomorphic section $\psi \in H^0(B,E)$ such that $\psi(o)=v_o$ and $\norm{\psi}^2 \leq \mu(B)\cdot \norm{v_o}_{h_E}^2$.
Here $d\mu$ is the standard Lebesgue measure. This gives an upper bound for the $L^2$-minimal extension $\sigma$ of $v_o$, i.e.
\[ \frac{\norm{\sigma}^2}{\norm{v_o}_{h_E}^2} \leq \mu(B).\]
One can ask if more restrictions on $(E,h_E)$ can lead to sharper bounds. For example, if $\Delta \subset \C$ is the unit disk and $(E,h_E)=(\mathcal{O}_\Delta,e^{-\varphi})$ where $\varphi$ is a subharmonic function on $\Delta$, Genki \cite{Genki} proves a $L^2$-extension theorem depending on the weight function $\varphi$. Moreover, when $\varphi$ is radial, his estimates provide the $L^2$-minimal extensions. 

On the one hand, our result provides a geometric way of constructing $L^2$-minimal extensions without using $L^2$-estimates. On the other hand, our $L^2$-minimal extensions generalize Genki's $L^2$-minimal extensions to higher dimensional bases (where we allow arbitrary bounded symmetric domains). Let us explain the relation using the example of universal family of elliptic curves. Consider the biholomorphism between $\Delta$ and $\H$ where $t$ is sent to $i(1+t)/(1-t)=\tau$, then the $L^2$-minimal section we mentioned earlier becomes $\sigma(t)=(1-t)(dz)_t$ such that
\[ \norm{\sigma(t)}^2= 1-\abs{t}^2.\]
Since the extension problem for $(1,0)$-forms on $X_0$ is the same as extension problem for $(\mathcal{O}_{\Delta}, e^{-\varphi})$ where 
\[ \varphi= -\log (\norm{\sigma(t)}^2) = -\log(1- \abs{t}^2). \]
In particular, the weight function of this $L^2$-extension problem is a radial function on $\Delta$. More generally, we prove that polarization functions of holomorphic sections of Hermitian VHS are always $S^1$-invariant (Corollary \ref{cor:polarization is $S^1$-invariant}). 

Lastly, it is interesting to investigate the minimal constant $\norm{\sigma}^2/{\norm{v_o}_{h_E}^2}$ and see how does it depend on the geometry of the family. In \S \ref{sec:elliptic curve example}, we calculate the constant for universal families of elliptic curves over the upper half plane, which is $\pi/2$. Note that it is strictly smaller than $\pi$ as predicted by the sharp Ohsawa-Takegoshi Theorem.

In $\S \ref{sec:L^2}$, we state the $L^2$-minimal extension problem and formulate a variational criterion. In $\S \ref{sec:HSD}$, we review the construction of Hermitian VHS. In $\S \ref{sec:construction of sections}$, we describe the construction of holomorphic sections of Hermitian VHS and establish $S^1$-invariance property of polarization functions. In $\S \ref{sec:proof}$, we prove the main theorem. In $\S \ref{sec:elliptic curve example}$, we discuss the example of universal families of elliptic curves.

\begin{acknowledgements*}
We would like to thank Christian Schnell for his encouragement and valuable discussions. We are grateful to Nathan Chen, Robert Lazarsfeld, Radu Laza, Colleen Robles, Bowen Zhang and Zheng Zhang for useful conversations.
\end{acknowledgements*}

\section{$L^2$-minimal extension problem}\label{sec:L^2}

Let $D$ be a Stein manifold, i.e. $D$ is a complex submanifold of some $\C^N$. Let $o \in D$ be a reference point. We will describe the $L^2$-minimal extension problem in various settings.

\subsection{General set-up}
Let $(E,h_E)$ be a holomorphic vector bundle on $D$ with a smooth Hermitian metric $h_E$. The induced Hermitian inner product on the $H^0(D,E)$ is defined by 
\[ (\sigma_1,\sigma_2)_{L^2} \colonequals \int_D h_E(\sigma_1,\sigma_2) d\mu,\]
for $\sigma_1,\sigma_2 \in H^0(D,E)$ and $d\mu$ is the standard Lebesgue measure on $D$ induced from $\C^N$. The $L^2$-norm of $\sigma$ is defined by
\[ \norm{\sigma}^2\colonequals (\sigma,\sigma)_{L^2}.\]
The $L^2$-minimal extension problem is stated as follows:

\begin{problem}\label{problem:minimal extension}
For any $v_o \in E_o$, find a holomorphic section $\sigma \in H^0(D,E)$ such that 
\begin{itemize}
\item $\sigma(o)=v_o$.
\item $\norm{\sigma}^2$ is minimal among all holomorphic extensions of $v_o$.
\end{itemize}
\end{problem}

\begin{remark} \label{remark:Nakano}
If $h_E$ is Nakano semi-positive (see \cite[Ch. VII]{Demailly}), the Ohsawa-Takegoshi theorem \cite{GZ} guarantees that there is a holomorphic extension $\psi$ such that $\norm{\psi}^2$ is bounded above. In particular, the $L^2$-minimal extension of $v_o$ exists.
\end{remark}

There is a variational criterion for $L^2$-minimal extensions. 

\begin{lemma}\label{lem:minimal criterion}
Let $D$ be a Stein manifold and let $o\in D$ be a reference point. Let $(E,h_E)$ be a holomorphic vector bundle of rank $r$ on $D$ with a smooth Hermitian metric. Let
$\{ e_1,\ldots,e_r \}$ be a holomorphic frame of $E$. Then a $L^2$ holomorphic section $\sigma \in H^0(D,E)$ with $\sigma(o)=v_o$ is the $L^2$-minimal extension of $v_o$ if and only if it satisfies
\[  (fe_j,\sigma)_{L^2}=0 \]
for each $1\leq j\leq r$ and any holomorphic function $f$ on $D$ vanishing at $o$ such that $\norm{fe_j}^2 <\infty$. 

\end{lemma}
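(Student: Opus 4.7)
The plan is to treat this as the classical variational characterization of orthogonal projection in a Hilbert space. Let $\mathcal{H} \subset H^0(D,E)$ denote the Hilbert space of $L^2$ holomorphic sections under the inner product $(\cdot,\cdot)_{L^2}$, and set $\mathcal{H}_o \colonequals \{\tau \in \mathcal{H} : \tau(o) = 0\}$. Because evaluation at $o$ is a bounded linear functional on $\mathcal{H}$ (via a standard sub-mean-value estimate on a small coordinate ball around $o$), the subspace $\mathcal{H}_o$ is closed. For any fixed $L^2$ extension $\sigma_0$ of $v_o$, the affine set of all $L^2$ extensions of $v_o$ is $\sigma_0 + \mathcal{H}_o$, and the $L^2$-minimizer on this affine subspace is characterized by being $L^2$-orthogonal to $\mathcal{H}_o$.

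The forward implication is the usual variational argument: if $\sigma$ is $L^2$-minimal and $\tau = fe_j$ is an admissible test section, then $\tau \in \mathcal{H}_o$ and the inequality
\[
\norm{\sigma}^2 \leq \norm{\sigma + t\tau}^2 = \norm{\sigma}^2 + 2\real\bigl(\bar{t}\,(\tau,\sigma)_{L^2}\bigr) + \abs{t}^2 \norm{\tau}^2
\]
for all $t \in \C$ forces $(fe_j,\sigma)_{L^2} = 0$ (take $t$ real and then purely imaginary, and let $t \to 0$).

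For the reverse implication, it suffices to show that orthogonality of $\sigma$ to every admissible test section $fe_j$ already implies orthogonality to every $\tau \in \mathcal{H}_o$, since then $\norm{\sigma + \tau}^2 = \norm{\sigma}^2 + \norm{\tau}^2 \geq \norm{\sigma}^2$ gives minimality. Given $\tau \in \mathcal{H}_o$, expand in the frame as $\tau = \sum_{j=1}^{r} g_j e_j$, where each holomorphic $g_j$ vanishes at $o$ because $\{e_j(o)\}$ is a basis of $E_o$. The main obstacle is that the summands $g_j e_j$ need not individually lie in $L^2$ even though $\tau$ does, so the hypothesis cannot be applied term by term without further work. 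I would overcome this by a density argument exploiting the Stein hypothesis on $D$: fix a smooth plurisubharmonic exhaustion, and use H\"ormander-type $L^2$-techniques to produce a sequence of bounded holomorphic multipliers $\chi_n \in \mathcal{O}(D)$ converging locally uniformly to $1$ and for which each $(\chi_n g_j) e_j$ is globally $L^2$. Since $g_j(o) = 0$, every $(\chi_n g_j) e_j$ belongs to the admissible test class and is therefore $L^2$-orthogonal to $\sigma$ by hypothesis; dominated convergence, justified by $\sigma \in L^2$, then yields $(\tau,\sigma)_{L^2} = 0$, completing the proof.
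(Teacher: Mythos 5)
Your forward direction coincides with the paper's and is fine. The problem is the reverse direction, and specifically the density step you use to pass from the test class $\{fe_j\}$ to all of $\mathcal{H}_o$. You correctly isolate the genuine subtlety here — if $\tau=\sum_j g_je_j$ vanishes at $o$ and is $L^2$, the individual components $g_je_j$ need not be $L^2$, because the Gram matrix $(h_E(e_j,e_k))$ may have unbounded condition number — but the proposed repair does not hold up. H\"ormander-type estimates produce holomorphic objects lying in \emph{weighted} $L^2$ spaces; they do not produce a uniformly bounded sequence $\chi_n\in\mathcal{O}(D)$ with $\chi_n\to 1$ locally uniformly and $(\chi_ng_j)e_j\in L^2$. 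In general no such sequence exists: $D=\C^n$ is a Stein manifold in the sense of the lemma, and its only bounded holomorphic functions are constants, so each $\chi_n$ would be constant and $(\chi_ng_j)e_j\in L^2$ would force $g_je_j\in L^2$ — exactly the hypothesis you are trying to avoid. Even on a bounded domain, the uniform bound $\sup_n\sup_D\abs{\chi_n}<\infty$ that your dominated-convergence step requires is incompatible with forcing integrability of $\abs{\chi_n}^2\norm{g_je_j}^2_{h_E}$ when $\norm{g_je_j}^2_{h_E}$ fails to be integrable near a large portion of the boundary, since a uniformly bounded holomorphic sequence tending to $1$ on compacta cannot decay along essentially all of the boundary. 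So the reverse implication is not proved as written.

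For comparison: the paper's own proof of this direction simply writes $\zeta-\sigma$ in the frame and asserts $(\zeta-\sigma,\sigma)_{L^2}=0$, i.e.\ it tacitly treats each component $g_je_j$ as an admissible test section. You have therefore put your finger on a point the paper itself glosses over, but you have not closed it. The workable fix is not analytic gymnastics with multipliers but an extra (and, in the paper's application, verifiable) uniformity hypothesis on the frame: if the Gram matrix $(h_E(e_j,e_k))$ has uniformly bounded condition number on $D$ — as one expects for the group-theoretic frame of Section 5, where the polarization functions extend continuously to the compact closure of the bounded symmetric domain — then pointwise $\norm{g_je_j}^2_{h_E}\leq C\,\norm{\sum_k g_ke_k}^2_{h_E}$, so every component of an $L^2$ section in $\mathcal{H}_o$ is automatically an admissible test section and the reverse direction follows term by term, with no density argument needed. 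I recommend either adding that hypothesis to the statement or verifying it where the lemma is invoked.
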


\begin{proof}
This is a standard argument for minimizers of $L^2$-norms. Let $\psi \in H^0(D,E)$ be a section vanishing at $o$ with $\norm{\psi}^2< \infty$, then minimality of $\sigma$ implies that
\[ \norm{\sigma + \epsilon \psi}^2 \geq \norm{\sigma}^2, \forall \epsilon \in \C. \]
Thus $ (\psi,\sigma)_{L^2} =0$.
In particular, we have
\begin{eqnarray}\label{eqn:vanishing1}
 (fe_j,\sigma)_{L^2}=0
\end{eqnarray}
for each $1\leq j\leq r$ and any holomorphic function $f$ on $D$ vanishing at $o$ with $\norm{fe_j}^2 <\infty$.

Conversely, let $\sigma$ be a $L^2$ holomorphic section extending $v_o$ satisfying the equation (\ref{eqn:vanishing1}). Let $\zeta$ be any other holomorphic extension. Since $\zeta -\sigma$ vanishs at $o$, by writing it in terms of the holomorphic frame, we must have
\[ (\zeta-\sigma,\sigma)_{L^2} =0. \]
Hence
\[ \norm{\zeta}^2 = \norm{\zeta - \sigma}^2 + \norm{\sigma}^2 \geq \norm{\sigma}^2,\]
which implies that $\sigma$ is $L^2$-minimal.

\end{proof}

\subsection{Extensions for canonical forms of a family}\label{sec:norms of canonical forms}
Let $D$ be a Stein manifold. Let $o\in D$ be a reference point. Let $\pi: X \rightarrow D$ be a smooth projective family with $n$-dimensional fibers. For any $t\in D$, we denote $X_t\colonequals \pi^{-1}(t)$ to be the fiber over $t$. The direct image sheaf $E\colonequals \pi_{\ast}(\omega_{X/D})$ is a holomorphic vector bundle with a smooth Hermitian metric which is the $L^2$-metric on the fiber. For any integer $m$, we define a normalizing constant $c_m  \colonequals 2^{-m}(-1)^{m^2/2}$. The relevant $L^2$-norms are defined as follows:
\begin{enumerate}

\item For $v_t \in H^0(X_t,\omega_{X_t})=E_t$,
\[ \norm{v_t}^2 \colonequals c_n \int_{X_t} v_t\wedge \overline{v_t}. \]
\item For $\sigma \in H^0(D,\pi_{\ast}(\omega_{X/D}))$, then
\[ \norm{v}^2 \colonequals \int_{D} \norm{\sigma(t)}^2 d\mu \]
where $d\mu$ is the standard Euclidean measure.

\end{enumerate}
The constant $c_n$ comes from the difference between real and holomorphic coordinates. For any $t\in D$, locally on $X_t$ we can write $v_t=fdz_1\wedge \cdots \wedge dz_n$, then
\[ c_n v_t\wedge \overline{v_t}=\abs{f}^2(dx_1\wedge dy_1)\wedge \ldots \wedge (dx_n\wedge dy_n),\]
where $z_1=x_1+\sqrt{-1}y_1,\ldots,z_n=x_n+\sqrt{-1}y_n$ are local holomorphic coordinates on $X_t$.

The $L^2$-minimal extension Problem \ref{problem:minimal extension} can be reformulated as
\begin{problem}
For any $v_o \in H^0(X_o,\omega_{X_o})$, find $\sigma \in H^0(D,\pi_{\ast}(\omega_{X/D}))$ such that 
\begin{itemize}
\item $\sigma(0)=v_o$. 
\item $\norm{\sigma}^2$ is minimal among all holomorphic extensions of $v_o$.
\end{itemize}
\end{problem}

\subsection{Extensions for variations of Hodge structures}\label{sec:ext VHS}
Extensions of canonical forms in families can be formulated more generally for abstract variations of real Hodge structures. Let $D \subset \C^N$ be a Stein manifold and let $\V$ be a holomorphic vector bundle on $D$. 
\begin{definition}
We say $\V$ is a variation of real Hodge structures if 
\begin{itemize}
\item $\V$ has an integrable connection $\nabla$ and $\V$ has a filtration $F^{\bullet}\V$ by holomorphic subbundles such that $\nabla(F^p\V)\subset F^{p-1}\V \otimes \Omega^1_D$,
\item there exists a real vector space $V$ such that $\V\cong V_\C\otimes \mathcal{O}_D$, where $V_C=V\otimes_{\R}\C$.
\item $\V$ decompose in a $C^{\infty}$-way:
\[ \V = \oplus_{p+q=n} \V^{p,q}, \quad \V^{p,q}=F^p \cap \overline{F}^{q} \]
where $n$ is the weight of $\V$ and the complex conjugation is taken with respect to the real form $V$.
\end{itemize}
\end{definition}

Let $\V$ be a variation of real Hodge structures of weight $n$.
\begin{definition}\label{def:polarized VHS}
We say $\V$ is a polarized variation of real Hodge structures if
\begin{itemize}
\item $\V$ has a nondegenerate bilinear form coming from the flat extension of a bilinear form $S: V \times V \to \R$ and we let $S^{h}$ be the Hermitian form associated to $S$ where 
\[ S^h(v,w)=i^{-n}S(v,\overline{w}).\]
Then $(-1)^pS^{h}$ is positive definite on $\V^{p,q}$, 
\item $(\V_t,F^{\bullet}\V_t,S_t)$ is a polarized real Hodge structure for each $t \in D$.
\end{itemize}
\end{definition}
Let $\V$ be a polarized real variation of Hodge structures, it is equipped with a smooth Hermitian metric $h_{\V}$ which is defined by 
\[ h_{\V}=\oplus_p (-1)^pS^{h}|_{\V^{p,q}}. \]
Let $E$ be the smallest nonzero piece in the Hodge filtration of $\V$. The smooth Hermitian metric $h_{E}\colonequals h_{\V}|_{E}$ is Nakano semi-positive \cite[Lemma 7.18]{Schmid}. Therefore the $L^2$-minimal extensions exist for $(E,h_E)$ by Remark \ref{remark:Nakano}. In particular, it applies to the extension of canonical forms of a smooth projective family.

\begin{remark}
We can formulate the $L^2$-minimal extension problem more generally for polarized \textit{complex} variations of Hodge structures, but we will only deal with real variations in this paper.
\end{remark}

\section{Variation of Hodge structures over Hermitian symmetric domains}\label{sec:HSD}

In this section, we would like to recall backgrounds on Hermitian symmetric domains and review the construction of Hermitian VHS. The primary source is \cite{Deligne}, one can also find them in \cite{LZ,Milne04}.

\subsection{Equivalent characterizations of Hermitian symmetric domains}\label{sec:bounded symmetric domain}

A Hermitian symmetric domain has three equivalent characterizations:
\begin{enumerate}
\item Intrinsic analytic description as a noncompact Hermitian symmetric manifold;
\item Extrinsic analytic description as a bounded symmetric domain;
\item Algebraic description as a conjugacy class of $\mathbb{S} \to \M$ (see Theorem \ref{thm:conjugacy class}).
\end{enumerate}

In this paper, each of three descriptions will play some role. The first description is natural from the view point of period domains. The second description is useful for the set up of $L^2$-minimal extension problem and the construction of holomorphic sections of Hermitian VHS. The third description is used to construct Hermitian VHS.

\begin{definition}
Let $D$ be a connected complex manifold with a Hermitian metric $g$. $D$ is a Hermitian symmetric domain if 
\begin{itemize}
\item the holomorphic isometry group $\mathrm{Is}(D,g)$ acts transitively on $D$ and for any point $o \in D$, there exists an involution $s \in \mathrm{Is}(D,g)$ so that $s^2=\Id$ and $o$ is an isolated fixed point of $s$.
\item $D$ is noncompact and not of Euclidean type.
\end{itemize}
\end{definition}

\begin{definition}
A bounded symmetric domain $D$ (i.e. an open connected subset of $\C^N$) is a bounded subset so that
\begin{itemize}

\item the biholomorphism group $\Hol(D)$ acts transitively on $D$;
\item For any $o \in D$, there exists an involution $s \in \Hol(D)$ such that $s^{2}=\Id$ and $o$ is an isolated fixed point of $s$.
\end{itemize}
\end{definition}
 
By \cite[Chap. VIII]{Helgason}, any bounded symmetric domain is equipped with a complex structure induced from $\C^N$ and a Hermitian metric (the Bergman metric) so that it is a Hermitian symmetric domain. Conversely, any Hermitian symmetric domain can be realized as a bounded symmetric domain by the Harish-Chandra embedding (Theorem \ref{thm:HC embedding}). In particular, bounded symmetric domains are in one-to-one correspondence with Hermitian symmetric domains. 

Lastly, let us describe $D$ in terms of the theory of Shimura varieties. Let $o\in D$ be a reference point.  
\begin{lemma}(  \cite[\S 1]{LZ} and \cite[\S 3]{Gross})
\label{lem:associated objects}

\begin{enumerate}

\item There is a unique simply connected real algebraic group $\G$ such that $\G_\R$ acts transitively on $D$ and the stabilizer $K$ at $o$ is a maximal compact subgroup.

\item There is an unique algebraic map
\[ u: U(1) \to K\subset \G_\R \]
such that $u(-1)$ is the unique element of order $2$ and $u(z) $ acts on $T_oD$ as multiplication by $z^2$.

\end{enumerate}
\end{lemma}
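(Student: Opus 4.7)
For part (1), the plan is to construct $\G$ from the isometry group and then lift it algebraically. Let $G^0 \subset \mathrm{Is}(D,g)$ be the identity component. Because $D$ is a noncompact symmetric manifold that is not of Euclidean type, the standard theory (e.g.\ Helgason, Chapter VI) identifies $G^0$ as a real semisimple Lie group of noncompact type, with the stabilizer $K^0$ of $o$ a maximal compact subgroup and $D = G^0/K^0$. Its Lie algebra $\g$ is semisimple, so the adjoint group $\mathrm{Int}(\g)$ is the real locus of a linear algebraic group. I would then take $\G$ to be the unique simply connected real algebraic group with $\Lie(\G_\R) = \g$. The canonical isogeny $\G \to \mathrm{Int}(\g) \cong G^0/Z(G^0)$ endows $\G_\R$ with a transitive action on $D$; the stabilizer $K$ of $o$ in $\G_\R$ is the preimage of $K^0$, which remains compact (the kernel is central and finite for a simply connected semisimple group) and maximal by the Cartan decomposition. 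Uniqueness of $\G$ is forced: any such group must have Lie algebra $\g$ and be simply connected.

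For part (2), I would exploit the $K$-invariant complex structure on $T_oD$. Via the Cartan decomposition $\g = \mathfrak{k} \oplus \p$, the tangent space at $o$ is identified $K$-equivariantly with $\p$, and the almost complex structure $J$ becomes a $K$-equivariant endomorphism of $\p$. Since $K$ acts irreducibly on $\p$ (after reducing to the case of $D$ irreducible), a Schur-type argument on the $\mathfrak{k}$-module $\p$ forces $J$ to come from the adjoint action of a unique $\xi_0 \in \mathfrak{z}(\mathfrak{k})$; equivalently, on the decomposition $\p \otimes \C = \g^{-1,1} \oplus \g^{1,-1}$ into $\pm i$-eigenspaces of $J$, $\mathrm{ad}(\xi_0)$ acts as $\pm i$. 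Setting $\xi = 2\xi_0$ and defining $u(e^{i\theta}) = \exp(\theta \xi)$, the differential $du$ acts on $T_oD$ with weight $2$, so $u(z)$ acts as multiplication by $z^2$.

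The main obstacle is twofold. First, one must verify that the rescaled $\xi$ exponentiates to a closed circle subgroup, i.e.\ $\exp(2\pi \xi) = 1$ in $\G_\R$; this requires analyzing the eigenvalue lattice of $\mathrm{ad}(\xi)$ on $\g_\C$ and invoking the simple connectedness of $\G$. Second, one must establish that $\mathfrak{z}(\mathfrak{k})$ is one-dimensional so that $\xi_0$ is genuinely canonical; this is a signature feature of the Hermitian case (as opposed to a general symmetric space of noncompact type) and follows from the existence of a $G^0$-invariant integrable complex structure together with irreducibility. Once these are in hand, the characterization of $u(-1)$ as the distinguished order-$2$ element pins down the correct sign of $\xi$ and yields uniqueness of $u$.
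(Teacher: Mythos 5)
First, a structural remark: the paper does not actually prove this lemma --- it is imported verbatim from \cite[\S 1]{LZ} and \cite[\S 3]{Gross} --- so there is no argument in the text to compare yours against, and your sketch has to stand on its own. Your part (1) is the standard route (Helgason, Ch.\ VI and VIII) and is essentially fine, with the usual caveats that ``simply connected'' must be taken in the algebraic category ($\G_\R$ is never topologically simply connected in the Hermitian case, and the topological universal cover of $G^0$ is not linear), and that the uniqueness assertion only makes sense once one also demands that the action be almost faithful, so that the Lie algebra is pinned down; your one-line uniqueness argument silently assumes this.

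The genuine gap is in part (2), at exactly the step you flag but do not resolve: that $\xi=2\xi_0$ integrates to a closed circle in $\G_\R$, i.e.\ that $\mathrm{Exp}(2\pi\xi)=1$. The eigenvalue analysis you propose only shows that $\mathrm{Ad}(\mathrm{Exp}(2\pi\xi))=\Id$, i.e.\ that $\mathrm{Exp}(2\pi\xi)$ is \emph{central}; whether it is trivial is the question of whether twice the minuscule coweight attached to the special node lies in the coroot lattice, and simple connectedness works \emph{against} you here, since it makes the cocharacter lattice as small as possible (the analogous statement is automatic in the adjoint group, where $u=\bar u^{\,2}$ for the weight-one cocharacter $\bar u$ of Deligne--Milne). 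Concretely, take $D=B^2$ the complex $2$-ball, so $\G_\R=\mathrm{SU}(2,1)$ and $K=S(U(2)\times U(1))$. Any algebraic homomorphism $u:U(1)\to K$ lands in the diagonal torus, $u(z)=\mathrm{diag}(z^{a_1},z^{a_2},z^{b})$ with $a_1+a_2+b=0$, and acts on $T_oD$ with weights $z^{a_i-b}$; forcing both weights to equal $2$ gives $a_1=a_2=b+2$ and $3b+4=0$, which has no integral solution. Equivalently, your $\xi_0$ is $i\,\mathrm{diag}(1/3,1/3,-2/3)$, and $\mathrm{Exp}(2\pi\cdot 2\xi_0)=e^{4\pi i/3}\,\Id$ is a nontrivial element of the center $\mu_3$ --- which, moreover, contains no element of order $2$, so the characterization of $u(-1)$ also breaks down. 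Hence the step cannot be completed as you describe it for general $D$; it goes through only when $2\varpi^\vee$ lies in the coroot lattice (e.g.\ $\mathrm{SL}_2(\R)$, $\mathrm{Sp}_{2g}(\R)$, $\mathrm{Spin}(2,n)$, $\mathrm{SU}(p,p)$). To repair the argument one must follow the cited sources and work with the weight-one cocharacter of the \emph{adjoint} group (or otherwise reinterpret where $u$ is valued); as literally stated, with $\G$ simply connected and weight $z^2$, neither your proof nor the statement survives the example of the ball. A smaller point: the sign of $\xi$ is fixed by requiring weight $z^{2}$ (rather than $z^{-2}$) on the holomorphic tangent space, not by the condition on $u(-1)$, since $\mathrm{Exp}(\pi\xi)=\mathrm{Exp}(-\pi\xi)$ once $\mathrm{Exp}(2\pi\xi)=1$.
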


Let $\mathbb{S}\colonequals \mathbb{G}_m\times U(1)/\langle -1\times -1 \rangle$ be Deligne's torus. As in \cite[\S 3]{Gross}, $u$ induces an algebraic homomorphism $h: \mathbb{S} \to \M=\mathbb{G}_m\times \G/\langle -1\times u(-1) \rangle$.
One can recover $D$ from $\M$ and $h$ as follows. 
\begin{thm}(  \cite[Theorem 1.21]{LZ} and \cite[Prop. 4.9]{Milne04})\label{thm:conjugacy class}
$D$ is biholomorphic to the $\M_\R^{+}$-conjugacy class of $h$.
\end{thm}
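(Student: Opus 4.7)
The plan is to construct an explicit $\G_\R$-equivariant bijection between $D$ and the $\M_\R^+$-conjugacy class $X$ of $h$, and then verify that it respects the natural complex structures on both sides. Since $\M = \mathbb{G}_m \times \G/\langle -1\times u(-1)\rangle$ differs from $\G$ by a central torus, conjugation in $\M_\R^+$ factors through the adjoint action of $\G_\R$, so the orbit map $g\mapsto ghg^{-1}$ defines a map $\phi:\G_\R\to X$ whose fibers are cosets of the centralizer $Z_{\G_\R}(h)=Z_{\G_\R}(u)$. I would then take the natural identification $D\cong \G_\R/K$ (coming from Lemma~\ref{lem:associated objects}) and check that $\phi$ descends to a well-defined, $\G_\R$-equivariant bijection $\bar\phi: D\to X$.

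The key point for bijectivity is to establish $Z_{\G_\R}(u)=K$. For the inclusion $K\subset Z_{\G_\R}(u)$, I would use the fact that on Hermitian symmetric domains of noncompact type, $u(U(1))$ lies in the center of $K$: this can be seen from the weight decomposition $u(z)\cdot v = z^2 v$ on $T_oD$, which forces $u(z)$ to commute with the isotropy action. For the reverse inclusion, any $g\in Z_{\G_\R}(u)$ commutes with the Cartan symmetry $s=u(-1)$ and hence permutes the fixed locus of $s$; since $o$ is an isolated fixed point, $g\cdot o = o$, i.e.\ $g\in K$. Combined with the discussion of the $\mathbb{G}_m$-factor, this gives the desired set-theoretic bijection.

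To upgrade $\bar\phi$ to a biholomorphism, I would compare the complex structures at the marked point. At $o$, the tangent space $T_oD\cong \g_\R/\mathfrak{k}$ carries the almost complex structure $J_o$ determined by the condition that $u(z)$ acts as multiplication by $z^2$ on $T_oD$; concretely, the decomposition $(T_oD)_\C = T_o^{1,0}\oplus T_o^{0,1}$ is the weight $\pm 2$ decomposition under $u$. On the conjugacy class side, the tangent space $T_hX$ is identified with $\mathfrak{m}_\R/\mathrm{Lie}\,Z_{\M_\R}(h)\cong \g_\R/\mathfrak{k}$, and its complex structure is defined via Deligne's recipe: $\mathrm{Ad}\circ h$ induces a Hodge structure on $\mathfrak{m}_\C$, and $T_hX$ inherits a weight-$(-1,0)+(0,-1)$ decomposition from it. Because $h$ is built from $u$ and $u$ acts with weights $\pm 2$ on $\g_\R/\mathfrak{k}$, the two decompositions coincide, so $\bar\phi_*$ is $\C$-linear at $o$. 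Since both complex structures are $\G_\R$-invariant and $\bar\phi$ is equivariant, $\bar\phi$ is a biholomorphism on the whole space.

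The main technical obstacle is the last step: matching the complex structure on $T_oD$ (which is intrinsic to the Hermitian symmetric geometry) with the complex structure on $T_hX$ (which is extrinsic, defined through the Hodge structure on $\mathfrak{m}$). The bookkeeping here requires being careful about the normalization $u(z)\sim z^2$ versus the Hodge weight $(-1,0)+(0,-1)$, as well as the fact that the $\mathbb{G}_m$-factor in $\M$ does not contribute to the conjugacy class but does affect the Hodge structure. Once this is consistently sorted out—essentially by unwinding how $h$ was built from $u$ in the passage to Deligne's torus—the theorem follows.
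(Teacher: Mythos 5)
The paper itself does not prove Theorem \ref{thm:conjugacy class}: it is imported from \cite{LZ,Milne04}. Your proposal follows the same standard route those sources take --- identify $D\cong \G_\R/K$ via Lemma \ref{lem:associated objects}, show that $K$ is exactly the stabilizer of $h$ under conjugation (so the orbit map descends to a bijection), and then match the two $\G_\R$-invariant complex structures by a computation at the base point. The architecture is correct, and the reduction of $\M_\R^{+}$-conjugation to the adjoint action of $\G_\R$ is handled properly.

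However, one step fails as written, and it is exactly the normalization issue you flag at the end without actually resolving. In this paper's convention (Lemma \ref{lem:associated objects}), $u(z)$ acts on $T_oD$ as multiplication by $z^{2}$, so $u(-1)$ acts \emph{trivially} on $T_oD$; it is the central element of order two that gets quotiented out in forming $\M=\mathbb{G}_m\times\G/\langle -1\times u(-1)\rangle$, and it acts trivially on all of $D$. Hence $u(-1)$ is not the geodesic symmetry at $o$, its fixed locus is all of $D$, and your argument for the inclusion $Z_{\G_\R}(u)\subset K$ (``$g$ permutes the fixed locus of $s=u(-1)$; since $o$ is an isolated fixed point, $g\cdot o=o$'') collapses. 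The element inducing the symmetry at $o$ is $u(i)$, which acts as $i^{2}=-1$ on $T_oD$; with $s=u(i)$ the argument goes through, and in fact on a symmetric space of noncompact type $o$ is the \emph{unique} fixed point of the symmetry, so $g\cdot o=o$ follows at once. A smaller gap sits in the reverse inclusion: the isotropy representation of $K$ on $T_oD$ has kernel the finite center of $\G_\R$, so ``$u(z)$ acts as a scalar on $T_oD$'' only yields $[k,u(z)]\in Z(\G_\R)$; you need connectedness of $K$ (which holds here since $\G$ is simply connected) to conclude the commutator is trivial. With these two repairs your proof coincides with the standard one.
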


\begin{remark}
Notice that $\G$ can be embedded into $\M$ by $g \mapsto [1,g]$ such that $\M^{\mathrm{der}}=\G$.
\end{remark}

From now on, we will use the following 

\begin{set-up}\label{setup}
Let $D$ be a Hermitian symmetric domain. Let $\G$ be the simply connected real algebraic group associated to $D$ and let $h: \mathbb{S} \to \M$ be the homomorphism associated to a given point $o \in D$.  We identify $D$ with the $\M_\R^{+}$-conjugacy class of $h$, where $h\in D$ is a reference point.
\end{set-up}

\subsection{Hermitian VHS}\label{sec:VHS from representation}
Suppose we are in the Set-up \ref{setup}. Let $V$ be a real vector space and let $\rho: \M \to \mathrm{GL}(V)$ be a real algebraic representation. Deligne \cite{Deligne} showed that there is a real variation of Hodge structures on $D$ associated to $\rho$. We would like to review the construction.

Let $h':\mathbb{S} \to \M$ be any point in $D$. Let $V_\C \colonequals V\otimes_\R \C$ be the complexification of $V$, where $\M_\C$ acts on $V_\C$ via $\rho_\C$. We will write $\rho_\C=\rho$ by abusing notations. Then $\rho \circ h'$ defines a real Hodge structure on $V_\C$ such that
\[ V_\C = \bigoplus_{p,q} V^{p,q}_{h'},\]
where
\[ V^{p,q}_{h'} = \{ v \in V_\C \colon \rho(h'(z))(v) = z^{-p}\bar{z}^{-q}v, \forall z\in \mathbb{S} \}.\]
In particular, we have the weight space decomposition
\begin{eqnarray}\label{eqn:weight decomposition}
V = \bigoplus_{n\in \mathbb{Z}} V_{n,h'}, \quad V_{n,h'}\otimes_{\R} \C = \bigoplus_{p+q=n} V^{p,q}_{h'},
\end{eqnarray}
i.e. $v \in V_{n,h'}$ if and only if $\rho(h'(r))(v)= r^{-n}\cdot v$ for all $r\in \mathbb{G}_{m}(\R)=\R^{\ast}$.
\begin{remark}
Here we use the convention in the theory of Shimura varieties such that $\rho(h(z)) \cdot v = z^{-p}\bar{z}^{-q}\cdot v$.
\end{remark}

Since $D$ is a Hermitian symmetric domain, it can be shown that the weight decomposition of $V$ is independent of the choice of $h' \in D$ (  \cite[1.1.13($\alpha$)]{Deligne} and \cite[Theorem 1.20]{LZ}). Therefore, for any integer $n$, the space $V_n \colonequals V_{n,h'}$ is independent of the choice of $h'$. We consider the associated trivial bundle $\V=(V_n)_\C \otimes \mathcal{O}_D$ with the connection $\nabla=1\otimes d: (V_n)_\C \otimes \mathcal{O}_D \to (V_n)_\C \otimes \Omega^1_D$. The Hodge structure induced by $h'$ gives a filtration $F^{\bullet}\V$ on $\V$ where 
\[ (F^{p}\V)_{h'}=\oplus_{a\geq p} V^{a,n-a}_{h'}.\] 

\begin{thm}(\cite[Prop. 1.1.14]{Deligne})\label{thm:representation give VHS}
With the notations above, $(\V,F^{\bullet}\V,\nabla)$ is a variation of real Hodge structures of weight $n$.
\end{thm}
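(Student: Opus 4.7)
The plan is to argue Lie-theoretically: realize $F^\bullet\V$ as an $\M_\R^+$-equivariant filtration, then exploit the special feature of Hermitian symmetric domains that the adjoint Hodge structure on $\mathfrak{m}_\C = \Lie(\M_\C)$ induced by $\mathrm{Ad}\circ h$ is concentrated in only three pieces of types $(-1,1)$, $(0,0)$, $(1,-1)$. This decomposition simultaneously controls the holomorphic structure on $D$ and the variation of $F^\bullet$, so it yields holomorphicity of the filtration and Griffiths transversality in one stroke.

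First I would write an arbitrary $h'\in D$ as $h' = g h g^{-1}$ with $g \in \M_\R^+$. Since $(\rho\circ h')(z) = \rho(g)\cdot(\rho\circ h)(z)\cdot\rho(g)^{-1}$, we get $V^{p,q}_{h'} = \rho(g)\,V^{p,q}_h$ and hence $F^p\V|_{h'} = \rho(g)\,F^p\V|_h$. Thus $F^p\V$ is the $\M_\R^+$-homogeneous subbundle whose fiber at $h$ is the $K$-invariant subspace $F^p\V|_h\subset V_\C$. To convert this into a holomorphic local trivialization, I would use that the identification $T^{1,0}_hD \cong \mathfrak{m}_\C^{-1,1}$ (a consequence of Lemma \ref{lem:associated objects}(2), since $u(z)$ acts on $T_oD$ as $z^2$) makes
\[ \mathfrak{m}_\C^{-1,1} \ni X \longmapsto \exp(X)\cdot h \]
a holomorphic chart around $h$. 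In this chart $F^p\V|_{\exp(X)\cdot h} = \exp(\rho_\ast(X))\,F^p\V|_h$, which is manifestly holomorphic in $X$, so $F^p\V$ is a holomorphic subbundle of $\V$.

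For Griffiths transversality, I would differentiate a local section of the form $s(X) = \exp(\rho_\ast(X))\,v(X)$, where $v(X)$ is a holomorphic $F^p\V|_h$-valued function. Using that $\mathfrak{m}_\C^{-1,1}$ is abelian (equivalently, $\mathfrak{m}_\C^{-2,2} = 0$), the derivative simplifies to
\[ \nabla s = \exp(\rho_\ast(X))\bigl(\rho_\ast(dX)\,v(X) + dv(X)\bigr). \]
The first summand lies in $\exp(\rho_\ast(X))F^{p-1}\V|_h = F^{p-1}\V|_{\exp(X)\cdot h}$, because $\rho_\ast(\mathfrak{m}_\C^{-1,1})$ shifts Hodge type by $(-1,1)$ (a formal consequence of $\rho$ being a map of $\mathbb{S}$-representations); the second summand is a section of $F^p\V\subset F^{p-1}\V$. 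This establishes $\nabla F^p\V\subset F^{p-1}\V\otimes\Omega^1_D$. The remaining conditions (the real structure, the $C^\infty$ decomposition $\V = \bigoplus_{p+q=n}\V^{p,q}$ with $\V^{p,q} = F^p\cap\overline{F^q}$, and the weight being $n$) reduce pointwise to the real Hodge structure on $V_n\otimes\C$ coming from $\rho\circ h'$, combined with the weight decomposition \eqref{eqn:weight decomposition}.

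The main non-formal input, and the only step where the Hermitian symmetric hypothesis is essential, is the shape of $\mathrm{Ad}\circ h$ on $\mathfrak{m}_\C$: namely $\mathfrak{m}_\C^{p,q} = 0$ unless $|p-q|\leq 1$. This is exactly what distinguishes Hermitian symmetric domains from general flag-type period domains and forces every holomorphic tangent direction of $D$ to be ``horizontal'' in the period-domain sense, so Griffiths transversality is automatic rather than an additional restriction. Verifying this Lie-algebra shape is a careful bookkeeping exercise with the normalization that $u(z)$ acts on $T_oD$ as $z^2$; once it is in place, the rest of the argument is a routine combination of $\M_\R^+$-equivariance with the exponential chart on $D$.
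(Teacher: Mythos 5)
The paper does not actually prove this statement: it is quoted from Deligne \cite[Prop.~1.1.14]{Deligne} and used as a black box, so there is no in-paper argument to compare yours against line by line. Your Lie-theoretic proof is, however, essentially the standard one, and it reuses machinery the paper develops later for a different purpose: your exponential chart is the inverse of the Harish-Chandra embedding (Theorem \ref{thm:HC embedding}), your equivariance step is Lemma \ref{lem:equivariance of fiber}, and your type-shifting computation is Lemma \ref{lem: shifting of (p,q)}. The overall strategy --- holomorphicity of $F^\bullet\V$ and Griffiths transversality both flowing from the fact that $\mathrm{Ad}\circ h$ has only types $(-1,1),(0,0),(1,-1)$ on $\Lie\M_\C$ --- is sound.

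One step needs more care. The identity $F^p\V|_{\exp(X)\cdot h} = \rho(\exp X)\,F^p\V|_h$ does not follow from the $\M_\R^{+}$-equivariance you establish, because $\exp(X)$ for $X \in \g^{-1,1}_\C$ is not a real group element. If $h' = \exp(X)\cdot h = g\cdot h$ with $g \in \G_\R^{+}$, equivariance gives $F^p\V|_{h'} = \rho(g)F^p\V|_h$, and to replace $\rho(g)$ by $\rho(\exp X)$ you must know that $g^{-1}\exp(X)$ stabilizes $F^{\bullet}\V|_h$. This is exactly where the decomposition $g \in \exp(X)K_\C P_{-}$ of Lemma \ref{lem:relations} enters, together with the facts that $\rho(K_\C)$ preserves each $V^{p,q}_h$ and that $\rho(P_{-})$ preserves $F^p\V|_h$ (Lemma \ref{lem: shifting of (p,q)}); the paper runs precisely this argument in the proof of Proposition \ref{prop:equivariant sections}, and you should import it here. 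With that inserted, your proofs of holomorphicity and of transversality go through (the abelianness of $\g^{-1,1}_\C$ from Lemma \ref{lem:subalgebras} justifies your formula for the derivative of $\exp(\rho_{\ast}(X))$). Two smaller points: the condition you state at the end, that the adjoint Hodge structure vanishes unless $|p-q|\leq 1$, is mis-normalized --- for a weight-zero structure the types are $(p,-p)$, so the correct condition is $|p|\leq 1$; and the independence of the weight decomposition from $h'$, which you use implicitly in order to have a single trivial bundle $(V_n)_\C\otimes\mathcal{O}_D$, is itself a nontrivial consequence of the Hermitian hypothesis, for which the paper cites \cite[1.1.13($\alpha$)]{Deligne}.
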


\begin{definition}\label{def:Hermitian VHS}
Let $\rho:\M \to \mathrm{GL}(V)$ be a real represention and let $n$ be an integer. We will say $\V$ constructed above is a Hermitian VHS.
\end{definition}

On the one hand, $\M_\R^{+}$ acts on $D$ via conjugactions. On the other hand, $\M_\R^{+}$ acts on $V_\C$ via the representation. These two actions are compatible in the following sense:

\begin{lemma}\label{lem:equivariance of fiber}
With the notations above. Let $g\in \M_\R^{+}$ be an element. Then for any $(p,q)$ we have
\[ V^{p,q}_{g\cdot h} = \rho(g) \cdot V^{p,q}_{h}. \]
\end{lemma}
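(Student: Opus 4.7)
The plan is to unpack the definitions and observe that everything reduces to the fact that $\M_\R^+$ acts on $D$ by conjugation on the homomorphisms $h'\colon \mathbb{S}\to \M$. Recall from Set-up \ref{setup} that $D$ is identified with the $\M_\R^+$-conjugacy class of $h$, so for $g \in \M_\R^+$ the point $g\cdot h \in D$ corresponds to the homomorphism
\[ (g\cdot h)(z) \;=\; g\, h(z)\, g^{-1}, \qquad z \in \mathbb{S}. \]

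First I would write down the defining equation for the $(p,q)$-eigenspace at the new base point: $v \in V^{p,q}_{g\cdot h}$ means $\rho\bigl((g\cdot h)(z)\bigr)v = z^{-p}\bar z^{-q}\,v$ for all $z \in \mathbb{S}$. Using the conjugation formula above, this becomes
\[ \rho(g)\,\rho(h(z))\,\rho(g)^{-1}\,v \;=\; z^{-p}\bar z^{-q}\,v. \]
Then I would set $w \colonequals \rho(g)^{-1}v$ and apply $\rho(g)^{-1}$ to both sides; the eigenvalue scalar commutes with $\rho(g)^{-1}$, so we are left with $\rho(h(z))\,w = z^{-p}\bar z^{-q}\,w$, i.e.\ $w \in V^{p,q}_{h}$. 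This is a reversible chain of implications, hence
\[ V^{p,q}_{g\cdot h} \;=\; \rho(g)\cdot V^{p,q}_{h}, \]
which is the claim.

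There is essentially no obstacle here, since the statement is a formal consequence of the conjugation action combined with the fact that $\rho$ is a group homomorphism. The only subtlety worth a sentence of comment is that we are really computing in the complexification $\rho_\C\colon \M_\C \to \mathrm{GL}(V_\C)$ (as noted in \S\ref{sec:VHS from representation}, we abuse notation and write $\rho$ for $\rho_\C$), and that the identification of $D$ with the conjugacy class of $h$ from Theorem \ref{thm:conjugacy class} is what allows us to interpret $g\cdot h$ as the conjugated homomorphism. Once these identifications are in place, the equivariance is tautological.
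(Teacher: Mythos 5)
Your proof is correct and follows exactly the same route as the paper's: unwind the definition of $V^{p,q}_{g\cdot h}$, use that $(g\cdot h)(z) = g\,h(z)\,g^{-1}$ together with $\rho$ being a homomorphism, and substitute $w = \rho(g)^{-1}v$ to reduce to the eigenspace condition at $h$. No differences worth noting.
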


\begin{proof}
Denote $h'$ to be the homomorphism $g\cdot h$, which is defined by $h'(z)=g \cdot h(z) \cdot g^{-1}$ for any $z\in \mathbb{S}$. Then
\begin{align*}
V^{p,q}_{g\cdot h} =V^{p,q}_{h'}&= \{ v\in V_\C \colon \rho(h'(z))(v)=z^{-p}\bar{z}^{-q}  v, \quad \forall z \in \mathbb{S} \} \\
                    &=\{  v\in V_\C \colon \rho(g\cdot h(z)\cdot g^{-1})(v)=z^{-p} \bar{z}^{-q} v, \quad \forall z \in \mathbb{S} \} \\
                    & =\{  v\in V_\C \colon \rho(h(z))(\rho(g)^{-1}(v))=z^{-p} \bar{z}^{-q} (\rho(g)^{-1}(v)), \quad \forall z \in \mathbb{S} \} \\
                   &= \rho(g)\cdot V^{p,q}_{h}. \qedhere
\end{align*} \end{proof}

\section{Holomorphic sections of Hermitian VHS}\label{sec:construction of sections}
In this section, we will use the Harish-Chandra embedding to give a group-theoretic construction of holomorphic sections of Hermitian VHS. We will also deduce a $S^1$-invariant property for polarization functions of these sections.

\subsection{Harish-Chandra embeddings}\label{sec:Harish-Chandra}
Suppose we are in the Set-up \ref{setup}. We will review the construction of Harish-Chandra embedding from the view point of Shimura varieties (\cite[\S 2.4]{Viviani} and \cite[Ch. VIII \S 7.1]{Helgason}) and prove some useful properties.
 
Consider the real Lie algebra $\g=\Lie \G_\R$. Since $\Lie \M_\R$ admits a Hodge structure of weight $0$ associated to $\mathrm{Ad}\circ h$ (\cite[Theorem 1.20]{LZ}),  the Hodge decomposition of $\Lie \M_\C$ restricting to $\g_\C$ is 
\[ \g_\C = \g^{0,0}_\C \oplus \g^{-1,1}_\C \oplus \g^{1,-1}_\C.\]
Since $\mathbb{G}_m$ acts as identity, we have
\[ \g^{0,0}_\C=\g_\C \cap (\Lie \M_\C)^{0,0}, \g^{-1,1}_\C=(\Lie \M_\C)^{-1,1},  \g^{1,-1}_\C=(\Lie \M_\C)^{1,-1}.\]
\begin{remark} 
One can also see this decomposition directly via $\mathrm{Ad}\circ u$, where $u: U(1) \to \G_\R$ is the algebraic homomorphism in Lemma \ref{lem:associated objects}.
\end{remark}
\begin{lemma}\label{lem:subalgebras}
The subspaces $\g^{0,0}_\C, \g^{-1,1}_\C$ and $\g^{1,-1}_\C$ are subalgebras of $\g_\C$. Moreover,
\[ [\g^{-1,1}_\C,\g^{-1,1}_\C]=0, \quad [\g^{1,-1}_\C,\g^{1,-1}_\C]=0, \quad [\g^{0,0}_\C,\g^{-1,1}_\C ]\subset \g^{-1,1}_\C,  \quad [\g^{0,0}_\C,\g^{1,-1}_\C]\subset \g^{1,-1}_\C.\]
In particular, $\g^{-1,1}_\C$ and $\g^{1,-1}_\C$ are abelian subalgebras of $\g_\C$.
\end{lemma}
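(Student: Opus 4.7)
The plan is to exploit the fact that the Hodge decomposition on $\g_\C$ is induced by the adjoint action $\mathrm{Ad}\circ h\colon \mathbb{S} \to \mathrm{GL}(\g_\C)$, i.e.\ the pieces are characterized by
\[ \g^{p,q}_\C = \{ X \in \g_\C : \mathrm{Ad}(h(z))(X) = z^{-p}\bar{z}^{-q} X, \ \forall z \in \mathbb{S}(\R) \}. \]
The key observation is that for each $z$ the map $\mathrm{Ad}(h(z))$ is a Lie algebra automorphism of $\g_\C$, so the bracket has to intertwine with the character decomposition.

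Concretely, first I would take $X\in \g^{p,q}_\C$ and $Y\in \g^{r,s}_\C$ and compute
\[ \mathrm{Ad}(h(z))\bigl([X,Y]\bigr) = \bigl[\mathrm{Ad}(h(z))X,\mathrm{Ad}(h(z))Y\bigr] = z^{-(p+r)}\bar{z}^{-(q+s)}[X,Y]. \]
This forces $[X,Y]\in \g^{p+r,q+s}_\C$, i.e.\ the bracket is a morphism of Hodge structures (which indeed is the familiar fact that the adjoint Hodge structure on $\g$ has weight zero).

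Next I would apply this bidegree shift to all pairs from $\{(0,0),(-1,1),(1,-1)\}$, using that by the set-up these are the only nonzero Hodge pieces of $\g_\C$. The brackets $[\g^{-1,1}_\C,\g^{-1,1}_\C]$ and $[\g^{1,-1}_\C,\g^{1,-1}_\C]$ land in $\g^{-2,2}_\C=0$ and $\g^{2,-2}_\C=0$ respectively, giving the two vanishings and the fact that $\g^{\pm 1,\mp 1}_\C$ are abelian subalgebras. Likewise $[\g^{0,0}_\C,\g^{\pm 1,\mp 1}_\C]\subset \g^{\pm 1,\mp 1}_\C$ and $[\g^{0,0}_\C,\g^{0,0}_\C]\subset \g^{0,0}_\C$ are immediate, showing $\g^{0,0}_\C$ is a subalgebra as well.

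The only real subtlety, rather than an obstacle, is recalling why the Hodge type of $\g$ is concentrated in $\{(-1,1),(0,0),(1,-1)\}$. This is where the hypothesis that $D$ is Hermitian symmetric (equivalently, that $u(z)$ acts on $T_oD$ as $z^2$, per Lemma \ref{lem:associated objects}) enters: it forces the adjoint representation to break into only these three eigenspaces under $\mathrm{Ad}\circ u$, so the pieces of bidegree $(\pm 2,\mp 2)$ automatically vanish. With that ingredient from the previous section in hand, the proof reduces to the one-line bracket computation above.
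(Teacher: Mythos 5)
Your proposal is correct and is essentially the paper's own argument: the paper likewise observes that $\mathrm{Ad}(h(z))$ is a Lie algebra automorphism, deduces $[\g^{p_1,q_1}_\C,\g^{p_2,q_2}_\C]\subset \g^{p_1+p_2,q_1+q_2}_\C$, and concludes by noting that the types $(\pm 2,\mp 2)$ do not occur. Your additional remarks on why the adjoint Hodge structure is concentrated in $\{(-1,1),(0,0),(1,-1)\}$ just make explicit an input the paper cites from the preceding discussion.
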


\begin{proof}
Since $\mathrm{Ad}$ commutes with the Lie bracket on $\g_\C$, we have
\[ [\g^{p_1,q_1}_\C, \g^{p_2,q_2}_\C] \subset \g^{p_1+p_2,q_1+q_2}_\C. \qedhere\] 
\end{proof}

Now let us put the Hodge decomposition of $\g_\C$ into the setting of the Harish-Chandra embedding. Let $\g_\R=\mathfrak{k} \oplus \p$ be the Cartan decomposition induced by $\mathrm{Ad}(h(i))$, where $\mathfrak{k}$ (resp. $\p$) is the $1$ (resp. $-1$) eigenspace. In particular, $\p$ admits a complex structure via $\mathrm{Ad}(h(e^{i\pi/4}))$. Therefore we have
\[ \g_\C=\mathfrak{k}_\C \oplus \p^{i} \oplus \p^{-i},\]
where $\p^{\pm i}$ are $\pm i$ eigenspaces of $\p_\C$ with respect to $\mathrm{Ad}(h(e^{i\pi/4}))$. One can check that
\[ \mathfrak{k}_\C = \g^{0,0}_\C, \quad  \p^{i}=\g^{-1,1}_\C, \quad \p^{-i}=\g^{1,-1}_\C.\]
Let $K_\C, P_{+}$ and $P_{-}$ be the analytic subgroups of $\G_\C$ whose Lie algebras are $\g^{0,0}_\C,  \g^{-1,1}_\C, \g^{1,-1}_\C$. Let $K \subset \G_\R$ be the stabilizer of $h$ as in Lemma \ref{lem:associated objects}. We observe that $\Lie K=\mathfrak{k}$ and $K_\C$ is the complexification of $K$. Moreover, we have
\begin{lemma} (\cite[Chap. VIII, \S 7]{Helgason})\label{lem:relations}
\begin{enumerate}
\item $K_\C P_{+} K_\C^{-1} \subset P_{+}, \quad K_\C P_{-} K_\C^{-1} \subset P_{-}$.

\item The multiplication map $P_{+} \times K_\C \times P_{-} \to \G_\C$ is an injection and the image contains $\G_\R$. Also $P_{+} \cap K_\C P_{-}= \{ e\}$.

\item $P_{+}$ is an abelian Lie group and the exponential map $\mathrm{Exp}:\g^{-1,1}_\C \to P_{+}$ is a diffeomorphism. We denote its inverse by $\log: P_{+} \to \g^{-1,1}_\C$.
\end{enumerate}
\end{lemma}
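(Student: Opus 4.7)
My plan is to deduce the three statements in sequence from the bracket relations of Lemma \ref{lem:subalgebras} together with standard structure theory of complex Lie groups, following Harish-Chandra's classical approach.

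For part (1), the plan is to apply the identity $g\,\mathrm{Exp}(X)\,g^{-1}=\mathrm{Exp}(\Ad(g)X)$. Since $K_\C$ is connected with Lie algebra $\g^{0,0}_\C$, Lemma \ref{lem:subalgebras} gives $\Ad(K_\C)\cdot \g^{\pm 1,\mp 1}_\C\subset \g^{\pm 1,\mp 1}_\C$, so $K_\C$ normalizes both $P_+$ and $P_-$, which are by definition generated by the exponentials of these subalgebras.

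For part (3), Lemma \ref{lem:subalgebras} shows $[\g^{-1,1}_\C,\g^{-1,1}_\C]=0$, so $P_+$ is a connected abelian complex Lie group. The key observation is that every $X\in \g^{-1,1}_\C$ is ad-nilpotent: the operator $\mathrm{ad}(X)$ shifts the Hodge bigrading by $(-1,1)$, and since $\g_\C$ has only three graded components, $\mathrm{ad}(X)^3=0$. In any faithful algebraic representation of $\G_\C$ the exponential of a nilpotent element is a polynomial, and its inverse is also polynomial, so $\mathrm{Exp}:\g^{-1,1}_\C\to P_+$ is a biholomorphism.

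For part (2), I would first prove the set-theoretic intersection identities $P_+\cap K_\C P_-=\{e\}$ and $K_\C\cap P_-=\{e\}$ from the direct-sum decomposition $\g_\C=\g^{-1,1}_\C\oplus \g^{0,0}_\C\oplus \g^{1,-1}_\C$ and a local injectivity argument at $e$, followed by an algebraicity/dimension argument to promote local injectivity to a global one. Then, if $p_+ k p_- = p_+' k' p_-'$, rearranging and using part (1) to move the conjugation past $P_-$ gives $p_+'^{-1}p_+ \in P_+ \cap K_\C P_- = \{e\}$, hence $p_+=p_+'$; the remaining equation gives $k^{-1}k' \in K_\C\cap P_- = \{e\}$, so $k=k'$ and $p_-=p_-'$. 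The deepest assertion is the containment $\G_\R \subset P_+ K_\C P_-$. My plan is to realize it as the open-orbit statement of the Borel embedding: consider the holomorphic map $\G_\R/K \to \G_\C/(K_\C P_-)$ into the compact dual $\check D$ and verify, by computing differentials with the Hodge decomposition of $\g_\C$, that it is an open immersion whose image lies in the open cell $P_+ \cdot e(K_\C P_-)\cong P_+$. Equivalently, any $g\in \G_\R$ admits a decomposition in $P_+ K_\C P_-$, and the map $D \to P_+$ so obtained realizes the Harish-Chandra bounded embedding.

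The main obstacle is the containment $\G_\R \subset P_+ K_\C P_-$: this is essentially the content of the Borel embedding theorem, and requires constructing the compact dual $\check D$ as a flag variety of $\G_\C$ and checking that the $\G_\R$-orbit of the base point lies in the big Bruhat-type cell. The remaining pieces are formal consequences of the Hodge decomposition of $\g_\C$ and routine manipulations with $\mathrm{Ad}$.
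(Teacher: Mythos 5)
The paper does not prove this lemma at all: it is quoted directly from Helgason (Chap.\ VIII, \S 7), so there is no internal argument to compare against. Your outline is essentially a reconstruction of that classical proof. Parts (1) and (3) are correct and complete in substance: conjugation-invariance follows from $g\,\mathrm{Exp}(X)\,g^{-1}=\mathrm{Exp}(\Ad(g)X)$ together with $[\g^{0,0}_\C,\g^{\pm1,\mp1}_\C]\subset\g^{\pm1,\mp1}_\C$, and the three-step grading does force $\mathrm{ad}(X)^3=0$ for $X\in\g^{-1,1}_\C$, whence (using semisimplicity of $\G$) nilpotence in any algebraic representation and polynomiality of $\mathrm{Exp}$ and its inverse. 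The formal part of (2) is also fine: your rearrangement reducing injectivity of $P_+\times K_\C\times P_-\to\G_\C$ to the two intersection identities is exactly the standard manipulation, and the intersections can indeed be killed by the Lie-algebra direct sum (which makes them zero-dimensional) plus the fact that a finite algebraic subgroup of the unipotent group $P_+\cong\mathbb{G}_a^{\dim}$ is trivial.

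The one genuine gap is in your treatment of the containment $\G_\R\subset P_+K_\C P_-$. You propose to check that $\G_\R/K\to\G_\C/(K_\C P_-)$ is an open immersion and that its image lies in the big cell $P_+\cdot e(K_\C P_-)$, but openness of the $\G_\R$-orbit does not by itself force the orbit into the big cell: the complement of the big cell is a proper closed subvariety, and an open subset can perfectly well meet it. The classical argument closes this by a separate computation: write $\G_\R=\mathrm{Exp}(\p)\,K$ via the Cartan decomposition, note $K\subset K_\C$, and show $\mathrm{Exp}(\p)\subset P_+K_\C P_-$ by reducing to commuting copies of $\mathfrak{su}(1,1)$ attached to a maximal set of strongly orthogonal noncompact roots (equivalently, the polydisc theorem), where the decomposition is an explicit $2\times 2$ matrix identity. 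Without this step (or an equivalent compactness/connectedness argument), the containment --- which you correctly identify as the deepest assertion of the lemma --- remains unproved.
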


The Harish-Chandra embedding is defined as follows. Let $h' \in D$ be any point. Suppose $h'=g\cdot h$ for some $g \in \G_\R$. By Lemma \ref{lem:relations}, there is a unique $\zeta(g)\in P_{+}$ so that 
\[ g\in \zeta(g)K_\C P_{-}. \]
This gives us a well-defined map
\[ \zeta: D \to P_{+}, \quad \zeta(h')\colonequals \zeta(g).\]
\begin{thm}(Harish-Chandra embedding \cite[Chap. VIII, Theorem 7.1]{Helgason})\label{thm:HC embedding} 
With the notations above, the following map 
\[ i_{\mathrm{HC}}\colon D \xrightarrow{\zeta} P_{+} \xrightarrow{\log} \g^{-1,1}_{\C}\]
is a holomorphic open embedding such that $h$ is sent to $0$ and $i_{\mathrm{HC}}(D)$ is a bounded symmetric domain in $\g^{-1,1}_{\C}$.
\end{thm}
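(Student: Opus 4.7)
The plan is to establish the four properties of $i_{\mathrm{HC}}$ in turn---well-definedness, holomorphy, injectivity (from which an open embedding follows by a dimension count), and boundedness of the image---using throughout the identification $D \cong \G_\R / K$ from Lemma \ref{lem:associated objects}.

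For well-definedness and holomorphy, I would pass to the compact dual $\check D := \G_\C / K_\C P_-$. By Lemma \ref{lem:relations}(2) we have $P_+ \cap K_\C P_- = \{e\}$, so the orbit map $P_+ \hookrightarrow \check D$, $p \mapsto p \cdot K_\C P_-$, is a holomorphic open embedding (dimensions match). The orbit map $\G_\R \to \check D$ factors through $D = \G_\R/K$ and lands in the open chart $P_+ \subset \check D$ because $\G_\R \subset P_+ K_\C P_-$; the map $\zeta$ is then exactly the inverse of $P_+ \hookrightarrow \check D$ restricted to the image of $D$, and is therefore holomorphic. Well-definedness on cosets follows because, for $k_0 \in K \subset K_\C$ and $g = p_+ k p_- \in \G_\R$, Lemma \ref{lem:relations}(1) rewrites $p_- k_0 = k_0 \cdot (k_0^{-1} p_- k_0) \in K_\C P_-$, so $gk_0$ has the same $P_+$-component as $g$. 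Composing with the biholomorphism $\log: P_+ \xrightarrow{\sim} \g^{-1,1}_\C$ of Lemma \ref{lem:relations}(3) yields holomorphy of $i_{\mathrm{HC}}$, and $i_{\mathrm{HC}}(h) = 0$ is immediate.

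For injectivity, if $\zeta(g_1 K) = \zeta(g_2 K)$ then $g_1^{-1} g_2 \in \G_\R \cap K_\C P_-$, so it suffices to show $\G_\R \cap K_\C P_- = K$. Applying complex conjugation on $\G_\C$ relative to the real form $\G_\R$---which swaps $P_+ \leftrightarrow P_-$ since $\overline{\g^{-1,1}_\C} = \g^{1,-1}_\C$, and preserves $K_\C$ since $\mathfrak{k}_\C = \g^{0,0}_\C$ is the complexification of $\mathfrak{k}$---turns an element $k_\C p_- \in \G_\R$ into $\bar k_\C \bar p_-$ with $\bar p_- \in P_+$. The uniqueness clause of Lemma \ref{lem:relations}(2) then forces $p_- = e$ and $k_\C \in K_\C \cap \G_\R = K$. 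A dimension count $\dim_\R D = \dim_\R \p = 2\dim_\C \g^{-1,1}_\C$ (via $T_h D \cong \p$, where the complex structure from $\Ad(h(e^{i\pi/4}))$ matches $\p_\C = \g^{-1,1}_\C \oplus \g^{1,-1}_\C$ from Section \ref{sec:Harish-Chandra}) combined with holomorphy and injectivity then yields an open embedding.

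Boundedness follows from relative compactness of $D$ inside the compact dual $\check D$: the image of $D$ in $\check D$ has compact closure, and this closure lies in the open chart $P_+$; transporting via $\log$ to $\g^{-1,1}_\C$ and using any $K$-invariant Hermitian norm (which exists by compactness of $K$ acting linearly on $\g^{-1,1}_\C$ via $\Ad$) exhibits the image as bounded. The step I expect to be the main obstacle is the identity $\G_\R \cap K_\C P_- = K$ in the injectivity argument: correctly tracking how complex conjugation interacts with the $P_+ K_\C P_-$ factorization is where the Hodge-theoretic content of $\g_\C$ enters, whereas the other steps are either functorial (holomorphy via the compact dual) or immediate from compactness.
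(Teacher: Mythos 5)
The paper offers no proof of this theorem: it is quoted directly from \cite[Chap.~VIII, Theorem 7.1]{Helgason}, so your proposal must be measured against the classical argument. Your route through the compact dual $\check{D}=\G_\C/K_\C P_{-}$ (the Borel embedding) is the standard one, and the first three steps are essentially sound: well-definedness on $K$-cosets via $K_\C P_{-}K_\C^{-1}\subset P_{-}$, holomorphy by inverting the open chart $P_{+}\hookrightarrow\check{D}$, and injectivity by applying complex conjugation to an element of $\G_\R\cap K_\C P_{-}$ and invoking uniqueness of the $P_{+}K_\C P_{-}$ factorization. Two points there are asserted rather than proved --- that the $\G_\R$-invariant complex structure on $D$ agrees, under $T_hD\cong\p\cong\g^{-1,1}_\C$, with the $\G_\C$-invariant one on $\check{D}$, and that $K_\C\cap\G_\R$ equals $K$ rather than some larger group with Lie algebra $\mathfrak{k}$ --- but both are standard (the latter uses that the fixed-point group of the Cartan involution of the connected group $\G_\R$ is exactly $K$).

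The genuine gap is in the boundedness step. You deduce boundedness from the claim that the closure of the image of $D$ in $\check{D}$ lies inside the open chart $P_{+}$, but that claim is \emph{equivalent} to the boundedness you are trying to establish, and nothing you have written supports it. Lemma \ref{lem:relations}(2) only gives $\G_\R\subset P_{+}K_\C P_{-}$, i.e.\ that the image itself lies in the chart; since the chart is open and dense in $\check{D}$, an open subset of it (for instance the chart itself, which corresponds to all of $\g^{-1,1}_\C$) can perfectly well have closure meeting the complement, so containment of the orbit in the chart does not pass to its closure. The classical proof closes this gap with a quantitative input: choosing a maximal set of strongly orthogonal restricted roots one writes $D=K\cdot\mathrm{Exp}(\mathfrak{a})\cdot h$ and computes $\zeta\bigl(\mathrm{Exp}(\textstyle\sum_i t_iY_i)\bigr)=\mathrm{Exp}\bigl(\textstyle\sum_i\tanh(t_i)X_i\bigr)$ with $X_i\in\g^{-1,1}_\C$, so that $i_{\mathrm{HC}}(D)=\mathrm{Ad}(K)\cdot\{\sum_i s_iX_i:\abs{s_i}<1\}$ is manifestly bounded. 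Some such argument (the polydisc theorem, or an operator-norm estimate) is unavoidable; without it your final step does not close. You also do not address why the image is a bounded \emph{symmetric} domain, though that part does follow by transport of structure once the open embedding and boundedness are in place.
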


Notice that $K\subset K_\C$ and $K_\C P_{+} K_\C^{-1} \subset P_{+}$, therefore $K$ acts on $\g^{-1,1}_\C$ via $\mathrm{Ad}$. Moreover, $i_{\mathrm{HC}}$ is $K$-invariant by the following
\begin{lemma}\label{lem:S^1 action}
Let $k\in K$ be any element and denote $k\cdot$ to be the action of $k$ on $D$. Then
\[ i_{\mathrm{HC}}\circ (k\cdot)=\mathrm{Ad}(k)\circ i_{\mathrm{HC}}. \]
\end{lemma}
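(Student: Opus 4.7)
The plan is to unpack the definition of $i_{\mathrm{HC}}$ on a point of the form $k\cdot h'$, using that $K$ normalizes $P_+$ and that conjugation by $k$ commutes with the exponential map on $\g^{-1,1}_\C$.

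First I would fix $h'\in D$ and write $h'=g\cdot h$ for some $g\in \G_\R$. By Lemma \ref{lem:relations}(2), there exist unique $\zeta(g)\in P_+$, $k_0\in K_\C$, $p_-\in P_-$ with
\[ g=\zeta(g)\,k_0\,p_-, \]
so that by definition $\zeta(h')=\zeta(g)$ and $i_{\mathrm{HC}}(h')=\log \zeta(g)$. Given $k\in K$, we have $k\cdot h'=(kg)\cdot h$, hence I must compute $\zeta(kg)\in P_+$. Using the factorization of $g$, I would rewrite
\[ kg=k\,\zeta(g)\,k_0\,p_- = \bigl(k\,\zeta(g)\,k^{-1}\bigr)\bigl(k\,k_0\bigr)\,p_-. \]
Here $k\zeta(g)k^{-1}\in P_+$ by Lemma \ref{lem:relations}(1) (since $K\subset K_\C$ normalizes $P_+$), $k k_0\in K_\C$, and $p_-\in P_-$. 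By the uniqueness of the $P_+ K_\C P_-$ decomposition, this forces
\[ \zeta(k\cdot h')=k\,\zeta(g)\,k^{-1}. \]

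Next I would apply $\log:P_+\to \g^{-1,1}_\C$. The key point is the standard identity for Lie groups: for any $X\in \g^{-1,1}_\C$ and any $k\in \G_\R$,
\[ k\,\mathrm{Exp}(X)\,k^{-1}=\mathrm{Exp}(\mathrm{Ad}(k)X). \]
Because $K$ normalizes $P_+$, the right-hand side indeed lies in $P_+$, so this reduces, after taking $\log$, to
\[ \log\bigl(k\,\zeta(g)\,k^{-1}\bigr)=\mathrm{Ad}(k)\bigl(\log \zeta(g)\bigr). \]
Combining the two displayed equations gives
\[ i_{\mathrm{HC}}(k\cdot h')=\log \zeta(k\cdot h')=\mathrm{Ad}(k)\bigl(\log \zeta(h')\bigr)=\mathrm{Ad}(k)\bigl(i_{\mathrm{HC}}(h')\bigr), \]
which is the desired equivariance.

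The only non-formal input is the uniqueness of the triple $(\zeta(g),k_0,p_-)$, which is exactly Lemma \ref{lem:relations}(2), and the compatibility $\mathrm{Ad}(k)(\g^{-1,1}_\C)\subset \g^{-1,1}_\C$, which follows from $K\subset K_\C$ together with the bracket relations of Lemma \ref{lem:subalgebras}. I don't anticipate a serious obstacle; the entire argument is essentially bookkeeping once one observes that $K$-conjugation preserves the $P_+K_\C P_-$ cell of $g$ and acts by $\mathrm{Ad}(k)$ on the $P_+$-component.
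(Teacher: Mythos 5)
Your proposal is correct and follows essentially the same route as the paper: both arguments reduce to showing $\zeta(k\cdot h')=k\zeta(g)k^{-1}$ via the uniqueness of the $P_+K_\C P_-$ decomposition and the fact that $K\subset K_\C$ normalizes $P_+$, then apply $\log$. The only cosmetic difference is that you factor $kg$ directly while the paper conjugates to $kgk^{-1}$ (both represent the same point since $k$ fixes $h$), and your justification of $\log(k\,\zeta(g)\,k^{-1})=\mathrm{Ad}(k)\log\zeta(g)$ via $k\,\mathrm{Exp}(X)\,k^{-1}=\mathrm{Exp}(\mathrm{Ad}(k)X)$ is, if anything, cleaner than the paper's appeal to the differential of $\log$.
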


\begin{proof}
Let $h'\in D$ be any point and let $g\in \G_\R$ so that $h'=g\cdot h$. Let $k\in K$ be any element. Since $g\in \zeta(g) K_{\C}P_{-}$ and $P_{-}k^{-1} \subset K_{\C}P_{-}$ by Lemma \ref{lem:relations}, we have
\[ kgk^{-1} \in k\zeta(g) K_{\C}P_{-}k^{-1}=k\zeta(g)k^{-1}K_{\C}P_{-}. \]
Because $k\zeta(g)k^{-1}\in P_{+}$, the uniqueness of $\zeta(kgk^{-1})$ implies that
\[ \zeta(kgk^{-1}) = k\zeta(g)k^{-1}.\]
Since $K$ is the stabilizer of $h$, we conclude that
\[ i_{\mathrm{HC}}(k\cdot h')=i_{\mathrm{HC}}(kgk^{-1}\cdot h)=\log (k\zeta(g)k^{-1})=\mathrm{Ad}(k)(\log \zeta(g))=\mathrm{Ad}(k)(i_{\mathrm{HC}}(h')).\]
The second to the last equality follows from the fact that the differential of $\log:P_{+} \to \g^{-1,1}_{\C}$ is an isomorphism.
\end{proof}

\subsection{Construction of holomorphic sections}\label{sec:equivariant sections}
Suppose we are in the Set-up \ref{setup}. Let $\rho: \M \to \mathrm{GL}(V)$ be a real algebraic representation. As in \S \ref{sec:VHS from representation}, for an integers $n,p$, let $V_n$ be the weight $n$ space in the decomposition (\ref{eqn:weight decomposition}) and let $F^p\V$ be the $p^{\mathrm{th}}$ Hodge filtration of $\V=(V_n)_\C \otimes \mathcal{O}_D$. The goal of this section is to give a group-theoretic construction of a holomorphic section in $H^0(D,F^p\V)$ extending any given vector in $(F^p\V)_h$.

Let $d\rho: \Lie \M \to \mathrm{End}(V)$ be the differential of $\rho$. Recall from \S \ref {sec:Harish-Chandra} that $\g^{1,-1}_\C =(\Lie \M_\C)^{1,-1}$ is the $(1,-1)$-part of the decomposition with respect to $\mathrm{Ad}\circ h$. 

\begin{lemma}\label{lem: shifting of (p,q)}
Let $X\in \g^{1,-1}_\C$. For any integer $(p,q)$ so that $p+q=n$,
\[ d\rho(X) \cdot V^{p,q}_h \subset V^{p+1,q-1}_h.\]
In particular, for $g\in \mathrm{Exp}(\g^{1,-1}_\C)$ we have
\[ \rho(g)\cdot (F^p\V)_h \subset (F^{p}\V)_h.\]
\end{lemma}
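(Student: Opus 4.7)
The plan is to reduce the lemma to an explicit $\mathbb{S}$-weight computation and then pass from the Lie-algebra statement to the group statement via the exponential series. Recall from \S\ref{sec:Harish-Chandra} that $\g^{1,-1}_\C$ is defined as the $(1,-1)$-weight space of the Hodge decomposition of $\Lie \M_\C$ with respect to $\mathrm{Ad}\circ h$, so every $X\in \g^{1,-1}_\C$ satisfies $\mathrm{Ad}(h(z))(X)=z^{-1}\bar z\, X$ for all $z\in\mathbb{S}$; similarly, by definition $v\in V^{p,q}_h$ satisfies $\rho(h(z))(v)=z^{-p}\bar z^{-q}v$.

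For the first inclusion, I would differentiate the conjugation identity $\rho(h(z)\cdot g\cdot h(z)^{-1})=\rho(h(z))\rho(g)\rho(h(z))^{-1}$ at $g=e$ in the direction of $X$, obtaining
\[ d\rho(\mathrm{Ad}(h(z))(X))=\rho(h(z))\,d\rho(X)\,\rho(h(z))^{-1}. \]
Applied to $v\in V^{p,q}_h$ and combined with the two weight identities, this gives
\[ \rho(h(z))\bigl(d\rho(X)\cdot v\bigr)=z^{-(p+1)}\bar z^{-(q-1)}\bigl(d\rho(X)\cdot v\bigr) \]
for every $z\in\mathbb{S}$, which is exactly the defining condition for $d\rho(X)\cdot v\in V^{p+1,q-1}_h$.

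To upgrade this to the group-level statement, note that for $g=\mathrm{Exp}(X)$ with $X\in \g^{1,-1}_\C$ we have $\rho(g)=\exp(d\rho(X))$. Iterating the infinitesimal statement, $d\rho(X)^k$ sends $V^{p,q}_h$ into $V^{p+k,q-k}_h$, and since $V_\C$ is finite dimensional only finitely many Hodge pieces are nonzero, so $d\rho(X)$ is nilpotent on $V_\C$ and the exponential series is actually a finite sum. For any $v\in V^{p,q}_h\subset (F^p\V)_h$, each summand $\tfrac{1}{k!}d\rho(X)^k\cdot v$ lies in $V^{p+k,q-k}_h\subset (F^p\V)_h$, so summing over $k$ and then over $p'\ge p$ in the decomposition $(F^p\V)_h=\bigoplus_{a\ge p}V^{a,n-a}_h$ yields $\rho(g)\cdot (F^p\V)_h\subset (F^p\V)_h$.

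The content is essentially formal, so the only real obstacle is bookkeeping: one must take care that the Shimura-variety sign convention $\rho(h(z))v=z^{-p}\bar z^{-q}v$ used in \S\ref{sec:VHS from representation} agrees with the Hodge grading on $\g_\C$ in such a way that $\g^{1,-1}_\C$ raises the $p$-index by one (and hence preserves rather than lowers $F^p$); once this is pinned down, no further input is needed.
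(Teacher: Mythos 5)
Your proposal is correct and follows essentially the same route as the paper: a weight computation showing $\rho(h(z))\bigl(d\rho(X)\cdot v\bigr)=z^{-(p+1)}\bar z^{-(q-1)}\bigl(d\rho(X)\cdot v\bigr)$ via the conjugation identity (which the paper writes as $h(z)\cdot(X\cdot v)=(h(z)Xh(z)^{-1})\cdot(h(z)\cdot v)$), followed by nilpotency of $d\rho(X)$ on the finite-dimensional $V_\C$ to reduce $\rho(\mathrm{Exp}(X))$ to a finite sum whose terms all land in $(F^p\V)_h$. The only cosmetic difference is that you make the intertwining relation $d\rho(\mathrm{Ad}(h(z))(X))=\rho(h(z))\,d\rho(X)\,\rho(h(z))^{-1}$ explicit where the paper uses it implicitly.
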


\begin{proof}
Let $g\in \mathrm{Exp}(\g^{1,-1}_\C)$ and $X\in \g^{1,-1}_\C$ so that $g=\mathrm{Exp}(X)$. We will denote $g\cdot v \colonequals \rho(g)(v)$ and $X\cdot v \colonequals d\rho(X)(v)$.

Let $z\in \mathbb{S}$ by any element. Let $v\in V^{p,q}_h$ be a vector. Since $X\in \g^{1,-1}_\C$,  we have
\[ h(z)\cdot v=z^{-p}\overline{z}^{-q}v, \quad \mathrm{Ad}(h(z))(X)=z^{-1}\overline{z}^1X. \]
Hence
\begin{align*}
h(z)\cdot (X\cdot v) &= (h(z)Xh(z)^{-1})\cdot (h(z)\cdot v) \\
                               &= \mathrm{Ad}(h(z))(X) \cdot (z^{-p}\overline{z}^{-q}v)\\
                               &=(z^{-1}\overline{z}^1X) \cdot (z^{-p}\overline{z}^{-q}v) \\
                               &= z^{-(p+1)}\overline{z}^{-(q-1)}(X\cdot v)
\end{align*}
This implies that 
\[ X\cdot v \in V^{p+1,q-1}_h.\]
In particular, $X\cdot (F^p\V)_h \subset (F^{p+1}\V)_h$. On the other hand, $V$ has only finitely $(p,q)$ components, there exists an integer $k$ so that  $d\rho(X)^{k+1}=0$.  We conclude that
\begin{align*} 
g\cdot (F^p\V)_h &= \mathrm{Exp}(X) \cdot (F^p\V)_h \\
              &= \Id \cdot (F^p\V)_h + X\cdot (F^p\V)_h + \frac{X^2}{2} \cdot (F^p\V)_h + \cdots + \frac{X^{k}}{k!} \cdot (F^p\V)_h \\
              &\subset (F^p\V)_h + (F^{p+1}\V)_h + (F^{p+2}\V)_h + \cdots (F^{p+k}\V)_h\\
              &=(F^p\V)_h \qedhere
\end{align*}\end{proof}

Now we will start the construction of holomorphic sections. Let $i_{\mathrm{HC}}:D \to \g^{-1,1}_\C$ be the Harish-Chandra embedding (Theorem \ref{thm:HC embedding}).  Let $\widetilde{\V}=(V_n)_\C \otimes \mathcal{O}_{\g^{-1,1}_\C}$ be the trivial bundle on $\g^{-1,1}_\C$. Notice that $i_{\mathrm{HC}}^{\ast}\tilde{\V} = \V$, we will first construct a holomorphic section of $\widetilde{\V}$ and its pull-back will be our desired section.
\begin{lemma}\label{lem:holomorphic sections}
Given $v \in (V_n)_\C$, we define $\tilde{\sigma}$ to be a $C^{\infty}$-section of $\widetilde{\V}$ on $\g^{-1,1}_\C$ such that for $X\in \g^{-1,1}_\C$,
\[ \tilde{\sigma}(X) \colonequals \rho(\mathrm{Exp}(X))(v). \]
Then $\tilde{\sigma}$ is a holomorphic section of $\widetilde{\V}$.
\end{lemma}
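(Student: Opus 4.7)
The plan is to recognize $\tilde{\sigma}$ as a composition of standard holomorphic maps. First I would check the section is well-defined, i.e.\ that $\rho(\mathrm{Exp}(X))(v)\in (V_n)_\C$. Since $\M$ is the almost direct product $\mathbb{G}_m\cdot \G$ with commuting factors, every $g\in \G_\C$ commutes with $h(r)$ for $r\in\mathbb{G}_m$, so $\rho(g)$ preserves each eigenspace of $\rho(h(r))$, i.e.\ each weight space in the decomposition (\ref{eqn:weight decomposition}). Taking $g=\mathrm{Exp}(X)\in P_{+}\subset \G_\C$ confirms that $\tilde{\sigma}$ takes values in $(V_n)_\C$ and so defines a section of $\widetilde{\V}$.

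For holomorphicity, I would factor
\[
\tilde{\sigma}\colon \g^{-1,1}_\C \xrightarrow{\mathrm{Exp}} P_{+} \xrightarrow{\rho} \mathrm{End}\bigl((V_n)_\C\bigr) \xrightarrow{\mathrm{ev}_v} (V_n)_\C.
\]
The first arrow is a biholomorphism by Lemma \ref{lem:relations}(3); the second is the restriction to the complex Lie subgroup $P_{+}\subset \G_\C$ of the complex algebraic representation $\rho_\C$, hence holomorphic; and the third is $\C$-linear. A composition of holomorphic maps is holomorphic, so $\tilde{\sigma}$ is a holomorphic section of $\widetilde{\V}$.

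An equivalent and more explicit route, which will likely be useful downstream, is to expand $\mathrm{Exp}$ as a power series. Lemma \ref{lem: shifting of (p,q)} admits an evident analogue for $X\in\g^{-1,1}_\C$: the same weight computation with the roles of $\g^{1,-1}_\C$ and $\g^{-1,1}_\C$ swapped yields $d\rho(X)\cdot V^{p,q}_h\subset V^{p-1,q+1}_h$. Since $V_\C$ has only finitely many nonzero Hodge pieces, $d\rho(X)$ is nilpotent on $(V_n)_\C$ of uniformly bounded index $N$, and hence
\[
\tilde{\sigma}(X)=\sum_{k=0}^{N}\frac{d\rho(X)^k(v)}{k!}
\]
is a polynomial in $X\in\g^{-1,1}_\C$ viewed with its natural complex structure, so manifestly holomorphic. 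There is no essential obstacle in either route; the only point requiring attention is the well-definedness check, which reduces cleanly to the structural decomposition $\M=\mathbb{G}_m\cdot \G$.
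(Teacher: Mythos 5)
Your proposal is correct, and your second ("more explicit") route is precisely the paper's proof: the paper observes that $d\rho(X)\cdot V^{p,q}_h\subset V^{p-1,q+1}_h$ by the argument of Lemma \ref{lem: shifting of (p,q)}, deduces nilpotency of $d\rho(X)$ on $(V_n)_\C$, and concludes that $\tilde{\sigma}$ is a polynomial in $X$, hence holomorphic. Your first route (factoring through $\mathrm{Exp}$, $\rho$, and evaluation) and the explicit well-definedness check via the central $\mathbb{G}_m$ are harmless additions the paper leaves implicit.
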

\begin{proof}
It suffices to show that $\tilde{\sigma}:\g^{-1,1}_\C \to (V_n)_\C$ is holomorphic. For any $X \in \g^{-1,1}_\C$,
\[ \rho(\mathrm{Exp}(X))=e^{d\rho(X)}.\]
A similar argument in Lemma \ref{lem: shifting of (p,q)} implies that $X\cdot V^{p,q}_h \subset V^{p-1,q+1}_h$, therefore $d\rho(X)$ is a nilpotent operator on $(V_n)_\C$ and $e^{d\rho(X)}$ is a polynomial in $d\rho(X)$. Therefore $\tilde{\sigma}$ is holomorphic.
\end{proof}

\begin{prop}\label{prop:equivariant sections}
Given $v_h \in (F^p\V)_h$, let $\tilde{\sigma}$ be the holomorphic section in $H^0(\g^{-1,1}_\C, \widetilde{\V})$ constructed in Lemma \ref{lem:holomorphic sections}. Then the pull-back section 
\[ \sigma \colonequals i_{\mathrm{HC}}^{\ast}\tilde{\sigma} \in H^0(D,\V) \]
is a holomorphic section inside $H^0(D,F^p\V)$ and it extends $v_h$.
\end{prop}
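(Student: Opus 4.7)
The statement packages three assertions: $\sigma$ is holomorphic, $\sigma(h) = v_h$, and $\sigma(h') \in (F^p\V)_{h'}$ for every $h' \in D$. The first two are essentially formal corollaries of the set-up. Holomorphicity follows because $\sigma = i_{\mathrm{HC}}^{\ast}\tilde\sigma$ is the pull-back of the holomorphic section from Lemma~\ref{lem:holomorphic sections} by the holomorphic map $i_{\mathrm{HC}}$ of Theorem~\ref{thm:HC embedding}. Since $i_{\mathrm{HC}}(h) = 0$, one has $\sigma(h) = \rho(\mathrm{Exp}(0))(v_h) = v_h$. The substance of the proposition is therefore the filtration condition, and that is what I would focus on.

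The plan for the filtration condition is to reduce it pointwise to the Harish-Chandra decomposition. Pick $h' \in D$ and, using transitivity of $\G_\R$ on $D$ (Lemma~\ref{lem:associated objects}), write $h' = g \cdot h$ with $g \in \G_\R$. By Lemma~\ref{lem:relations}(2), $g$ has a unique factorization $g = \zeta(g) \cdot k_{\C} \cdot p_{-}$ with $\zeta(g) \in P_{+}$, $k_{\C} \in K_\C$, $p_{-} \in P_{-}$, and by construction of the embedding $i_{\mathrm{HC}}(h') = \log\zeta(g)$. Hence
\[
 \sigma(h') \;=\; \rho(\mathrm{Exp}(\log\zeta(g)))(v_h) \;=\; \rho(\zeta(g))(v_h).
\]
On the other hand, the equivariance Lemma~\ref{lem:equivariance of fiber} applied to $g \in \G_\R \subset \M_\R^{+}$ gives $(F^p\V)_{h'} = \rho(g)(F^p\V)_h = \rho(\zeta(g))\,\rho(k_{\C}p_{-})(F^p\V)_h$. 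The filtration condition is therefore equivalent to the fiberwise stability
\[ \rho(k_{\C}p_{-})(F^p\V)_h \;=\; (F^p\V)_h. \]

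Stability under $P_{-}$ is exactly Lemma~\ref{lem: shifting of (p,q)}, where applying the inclusion to both $p_{-}$ and $p_{-}^{-1}$ upgrades it to equality. Stability under $K_\C$ is the natural analogue: for $X \in \g^{0,0}_\C$ one has $\mathrm{Ad}(h(z))(X) = X$ for all $z \in \mathbb{S}$, so the same computation as in Lemma~\ref{lem: shifting of (p,q)} shows that $d\rho(X)$ commutes with $\rho(h(z))$ and hence preserves each eigenspace $V^{p,q}_h$; exponentiating then shows that $K_\C$ preserves $V^{p,q}_h$ and in particular $(F^p\V)_h$. The main delicate point I anticipate is keeping the two group actions synchronized---the representation $\rho$ on $V_\C$ and the conjugation action on $D$---so that the triangular factorization $g = \zeta(g) k_\C p_{-}$ on the domain side lines up with the filtration-preserving factorization $\rho(k_\C p_{-})(F^p\V)_h = (F^p\V)_h$ on the representation side. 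Once that alignment is in place, the Harish-Chandra factor $\zeta(g)$ carries exactly the information that transports $v_h$ into $(F^p\V)_{h'}$, and no $L^2$-analysis is required.
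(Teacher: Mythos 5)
Your proposal is correct and follows essentially the same route as the paper: reduce to the Harish-Chandra factorization $g=\zeta(g)k_\C p_-$, use the equivariance of the fibers to turn the filtration condition into stability of $(F^p\V)_h$ under $k_\C$ and $p_-$, and invoke Lemma~\ref{lem: shifting of (p,q)} for the $P_-$ factor. Your explicit $\g^{0,0}_\C$-weight argument for $K_\C$-stability is in fact slightly more careful than the paper's one-line assertion at that step.
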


\begin{proof}
To simplify the notations, for $g\in \G_\C$ and $v\in V_\C$, we will denote $g\cdot v \colonequals \rho(g)(v)$. Let $h'$ be any point in $D$. Since $\G_\R^{+}$ acts transitively on $D$ \cite[Theorem 1.6]{LZ}, we can assume $h'=g\cdot h$ for some $g\in \G_\R^{+}$. 
By Theorem \ref{thm:HC embedding} we have 
\[ i_{\mathrm{HC}}(h')=\log \zeta(g),\]
where $\zeta(g)$ is the unique element in $P_{+}$ such that $g \in \zeta(g)K_\C P_{-}$. Therefore 
\[ \sigma(h')=\tilde{\sigma}(i_{\mathrm{HC}}(h'))= \mathrm{Exp}(\log \zeta(g))\cdot v_h=\zeta(g) \cdot v_h. \]
Since $g\in \G_\R^{+} \subset \M_\R^{+}$, Lemma \ref{lem:equivariance of fiber} implies that $(F^p\V)_{h'}= g\cdot (F^p\V)_{h}$. Then
\[ \sigma(h') \in (F^p\V)_{h'} \Longleftrightarrow \zeta(g) \cdot v_h \in g\cdot (F^p\V)_{h} \Longleftrightarrow
g^{-1}\zeta(g) \cdot (F^p\V)_h \subset (F^p\V)_h. \]
Notice that $g \in \zeta(g)K_\C P_{-}$, there exists $k_\C \in K_\C$ and $p_{-} \in P_{-}$ such that $g=\zeta(g)k_\C p_{-}$. In particular,
\[ g^{-1}\zeta(g)=p_{-}^{-1}k_\C^{-1}.\]
Since $K$ is the stabilizer of $h$, $K_\C$ also fixes $h$ and 
\[k_\C^{-1}\cdot (F^p\V)_h=(F^p\V)_h.\]
On the other hand, $p_{-} \in P_{-}=\mathrm{Exp}(\g^{1,-1}_\C)$, Lemma \ref{lem: shifting of (p,q)} implies that
\[ p_{-}^{-1}k_\C^{-1} \cdot (F^p\V)_h \subset p_{-}^{-1}(F^p\V)_h \subset (F^p\V)_h.\qedhere\]
\end{proof}

\subsection{Polarizations of Hodge bundles}\label{sec:polarizations}
Suppose we are in the Set-up \ref{setup}. Let $\rho: \M \to \mathrm{GL}(V)$ be a real algebraic representation. In particular, $V$ is a $\G_\R$-representation. In this section, we will write $gv\colonequals \rho(g)(v)$. 

Let $n$ be an integer and let $V_n$ be the weight $n$ space in the decomposition \ref{eqn:weight decomposition}. In \S \ref{sec:VHS from representation}, we see that $\V=(V_n)_\C \otimes \mathcal{O}_D$ is a variation of real Hodge structures. In this section we would like to review the construction of a canonical polarization on $\V$ such that $\V$ becomes a polarized variation of real Hodge structures. The main goal is to deduce a $S^1$-invariance of the polarization function, which should be well-known to experts, but we include the proof here for the completeness.

We would construct a bilinear form on $V$ depending on $h(-i)\in \G_\R$. Therefore it is convenient to work in a more general setting following \cite[\S 2]{Milne13}. 

\begin{definition}
Let $V$ be a $\G_\R$-representation and let $C\in \G_\R$ be an element. A $C$-polarization on $V$ is a $\G_\R$-invariant bilinear form $S: V\times V \to \R$ such that the form $S_C$ defined by $S_C(v,w)\colonequals S(v,Cw)$ is symmetric and positive definite.
\end{definition}

Let $g \mapsto \overline{g}$ denotes the complex conjugation on $\G_\C$ with respect to $\G_\R$.
\begin{definition}
An involution $\theta$ of $\G_\R$ is said to be \textit{Cartan} if the group
\[ G^{\theta} \colonequals \{ g \in \G_\C \mid g=\theta(\overline{g})\}\]
is compact.
\end{definition}

\begin{prop}(\cite[Theorem 2.1]{Milne13})\label{prop:C-polarization}
Let $C\in \G_\R$ be an element such that $C^2$ lies in the center of $\G_\R$. If $\mathrm{Inn}(C)$ is a Cartan involution, then $V$ has a $C$-polarization $S$. Moreover, let $K^{\mathrm{Inn}(C)}$ be the fixed point set of $\mathrm{Inn}(C)$. The positive definite Hermitian form 
\[ \widetilde{S}(v,w) \colonequals S(v,\overline{Cw})\]
is $K^{\mathrm{Inn}(C)}$-invariant, where
\[ \widetilde{S}(kv,kw)=\widetilde{S}(v,w),\]
for any $k \in K^{\mathrm{Inn}(C)}$.
\end{prop}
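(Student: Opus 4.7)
The plan is to build $S$ from a $G^{\mathrm{Inn}(C)}$-invariant Hermitian form on $V_\C$, twisted by $C$. Since $\mathrm{Inn}(C)$ is Cartan, $G^{\mathrm{Inn}(C)}$ is a compact real form of $\G_\C$. Extend $V$ to a $\G_\C$-representation on $V_\C$; averaging any positive definite Hermitian form over the Haar measure of $G^{\mathrm{Inn}(C)}$ produces a positive definite Hermitian form $H$ on $V_\C$ invariant under $G^{\mathrm{Inn}(C)}$.

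The central algebraic input is the adjoint identity
\[ g^{\ast} = \mathrm{Inn}(C)(\bar g)^{-1} \quad \text{for all } g \in \G_\C, \]
where $g^{\ast}$ denotes the adjoint with respect to $H$. Both $g \mapsto g^{\ast\,-1}$ and $g \mapsto \mathrm{Inn}(C)(\bar g)$ are antiholomorphic involutions of $\G_\C$ whose fixed loci are, respectively, the unitary group of $H$ inside $\G_\C$ and $G^{\mathrm{Inn}(C)}$. Maximality of the compact real form in $\G_\C$ together with compactness of the unitary subgroup forces these two loci to agree, hence so do the involutions. Since $C^2$ lies in the center of $\G_\R$, decompose $V$ into the $\G_\R$-stable eigenspaces of $C^2$ and define $S(v, w) \colonequals \varepsilon \cdot \mathrm{Re}\, H(v, Cw)$ on each eigenspace, with sign $\varepsilon$ chosen so that $S_C(v,w) = \varepsilon \cdot \mathrm{Re}\, H(v, C^2 w) = \mathrm{Re}\, H(v, w)$ is symmetric and positive definite. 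The $\G_\R$-invariance of $S$ then reduces, for $g \in \G_\R$, to the chain $H(gv, Cgw) = H(gv, \mathrm{Inn}(C)(g)\, Cw) = H(v, g^{\ast}\mathrm{Inn}(C)(g)\, Cw) = H(v, Cw)$, using that $g^{\ast} = \mathrm{Inn}(C)(g)^{-1}$ for real $g$.

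For the $K^{\mathrm{Inn}(C)}$-invariance of $\widetilde{S}$, note that $\widetilde{S}(v, w) = S(v, C \bar w)$ since $C \in \G_\R$ commutes with complex conjugation on $V_\C$. For $k \in K^{\mathrm{Inn}(C)}$ we have $k \in \G_\R$, $[k, C] = 0$, and $k$ commutes with complex conjugation, so
\[ \widetilde{S}(kv, kw) = S(kv, C\, \overline{kw}) = S(kv, k C \bar w) = S(v, C \bar w) = \widetilde{S}(v, w), \]
invoking the $\G_\R$-invariance of $S$ established above.

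The main obstacle is the adjoint identity. Its verification depends on identifying $G^{\mathrm{Inn}(C)}$ with the full unitary group of $H$ inside $\G_\C$, which uses that a compact real form is automatically a maximal compact subgroup of its complexification, so the chain of compact subgroups $G^{\mathrm{Inn}(C)} \subseteq \mathrm{U}(H) \cap \G_\C$ must be an equality. One must also track the sign $\varepsilon$ carefully across the $C^2$-eigenspaces to guarantee that $S_C$ is globally symmetric positive definite; this is precisely where the hypothesis $C^2 \in Z(\G_\R)$ enters critically, since it makes the eigenspace decomposition $\G_\R$-equivariant. The remaining verifications are routine.
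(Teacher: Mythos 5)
First, a point of reference: the paper does not actually prove this proposition --- it is quoted from \cite[Theorem 2.1]{Milne13}, and the only original content is the remark that the $K^{\mathrm{Inn}(C)}$-invariance ``follows immediately from the construction in the proof.'' Your overall strategy (compactness of the twisted form $G^{\mathrm{Inn}(C)}$, an averaged invariant Hermitian form $H$, the unitarian-trick identity $g^{\ast}=\mathrm{Inn}(C)(\bar g)^{-1}$, and a definition of $S$ by twisting $H$ by $C$) is the standard route behind Milne's theorem, and your last paragraph supplies exactly the $K^{\mathrm{Inn}(C)}$-invariance that the paper's remark asserts: for $k$ real, commuting with $C$ and with conjugation, invariance of $\widetilde S$ is formal from $\G_\R$-invariance of $S$. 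For the adjoint identity itself, the cleanest justification is that both sides of $\rho(g)^{\ast}=\rho(\mathrm{Inn}(C)(\bar g))^{-1}$ are antiholomorphic in $g$ and agree on the Zariski-dense compact real form $G^{\mathrm{Inn}(C)}$; your fixed-locus argument quietly assumes that $g\mapsto (g^{\ast})^{-1}$ preserves $\rho(\G_\C)$, which itself needs this density argument.

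The genuine gap is in the definition of $S$. You set $S=\varepsilon\cdot\mathrm{Re}\,H(v,Cw)$ on each $C^2$-eigenspace with a sign $\varepsilon$ chosen so that $S_C(v,w)=\varepsilon\,\mathrm{Re}\,H(v,C^2w)$ equals $\mathrm{Re}\,H(v,w)$; this presupposes that $\rho(C^2)$ acts as the real scalar $\varepsilon=\pm1$, i.e.\ that $C^4$ acts trivially on $V$. The hypothesis only gives that $C^2$ is central. Since $C$ lies in the compact group $G^{\mathrm{Inn}(C)}(\R)$, the operator $\rho(C^2)$ is a central $H$-unitary operator and may act by a non-real unit scalar on an irreducible constituent, in which case no real sign makes $\varepsilon\,\mathrm{Re}\,H(v,C^2w)$ symmetric, let alone equal to $\mathrm{Re}\,H(v,w)$. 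Concretely, take $\G_\R=\mathrm{SO}(2)$ acting on $V=\R^2$ and $C=R_\beta$ with $\sin 2\beta\neq 0$: then $\mathrm{Inn}(C)=\mathrm{id}$ is Cartan, every invariant $H$ yields $\mathrm{Re}\,H(v,Cw)=a\,v^{T}R_\beta w$, and $S_C$ has matrix $aR_{2\beta}$, which is not symmetric; a $C$-polarization does exist there, but it necessarily has a nonzero alternating part, so it is not of the form $\pm\,\mathrm{Re}\,H(\cdot,C\cdot)$. The general statement requires the finer argument on isotypic components, correcting by the central unit $\rho(C^2)$ itself rather than by a sign. The gap is harmless for this paper's application, where $C=h(-i)$ and $C^2=h(-1)$ acts on $V_n$ as $(-1)^n$, so your construction works verbatim with $\varepsilon=(-1)^n$; but as a proof of the proposition as stated it is incomplete. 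You should also record why $\widetilde S$ --- the sesquilinear extension of the symmetric positive definite real form $S_C|_{V\times V}$ --- is positive definite Hermitian, since the statement asserts this and you never check it.
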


\begin{remark}
There is no $K^{\mathrm{Inn}(C)}$-invariant part in \cite[Theorem 2.1]{Milne13}, but it follows immediately from the construction in the proof.
\end{remark}

\begin{prop}\label{prop:polarization}
Suppose we are in the Set-up \ref{setup}. Let $V$ be a real algebraic representation of $\M$. Let $n$ be an integer and let $V_n$ be the weight $n$ space in the decomposition \ref{eqn:weight decomposition}. There is a bilinear form $S$ on $V_n$ such that $\V=(V_n)_\C \otimes \mathcal{O}_D$ is a polarized variation of real Hodge structures (Definition \ref{def:polarized VHS}). Moreover, $S^h$ is $K$-invariant, where
\[ S^h(v,w) \colonequals i^{-n}S(v,\overline{w}).\]
\end{prop}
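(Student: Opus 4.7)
The plan is to apply Proposition \ref{prop:C-polarization} to the $\M_\R$-representation $V_n$ with $C := h(i) \in \M_\R$, and then translate the resulting $C$-polarization into the polarized-VHS language of Definition \ref{def:polarized VHS}. To invoke Proposition \ref{prop:C-polarization} I first check its hypotheses: $C^2 = h(-1)$ is central in $\M_\R$, because $u(-1)$ acts as $(-1)^2 = 1$ on $T_oD \cong \p$ and trivially on $\mathfrak{k}$, hence lies in the kernel of $\mathrm{Ad}$ on $\g$ and is therefore central in the simply connected group $\G$ (Lemma \ref{lem:associated objects}), while the remaining $\mathbb{G}_m$-factor of $\M$ is central by construction. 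The requirement that $\mathrm{Inn}(h(i))$ be a Cartan involution is the standard axiom for $h$ to be a Shimura datum of Hermitian symmetric type, classical for the $D$ under consideration.

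Granting the hypotheses, Proposition \ref{prop:C-polarization} produces an $\M_\R$-invariant bilinear form $S$ on $V_n$ together with a positive definite $K^{\mathrm{Inn}(C)}$-invariant Hermitian form $\widetilde{S}(v,w) = S(v,\overline{Cw})$. The next step is a direct calculation: for $v,w \in V^{p,q}_h$, the convention $h(z)v = z^{-p}\bar z^{-q}v$ yields $h(i)w = i^{q-p}w$, hence $\overline{Cw} = i^{p-q}\overline{w}$, so
\[ \widetilde{S}(v,w) \;=\; i^{p-q}\, S(v,\overline{w}) \;=\; i^{2p}\cdot i^{-n}\, S(v,\overline{w}) \;=\; (-1)^p\, S^h(v,w). \]
Because $\widetilde{S}$ is positive definite on $V_n \otimes \C$, this identity forces $(-1)^p S^h$ to be positive definite on each $V^{p,q}_h$, which is precisely the pointwise condition in Definition \ref{def:polarized VHS}. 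The $\M_\R$-invariance of $S$ makes the induced bilinear form on $\V$ flat, and Lemma \ref{lem:equivariance of fiber} combined with the transitive action of $\M_\R^+$ on $D$ propagates the polarization condition from the basepoint $h$ to every $h' \in D$.

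For the $K$-invariance of $S^h$, observe that $K$ is the stabilizer of $h$ in $\G_\R$, so every $k \in K$ commutes with the entire image $h(\mathbb{S})$ and in particular with $C = h(i)$, which places $K$ inside $K^{\mathrm{Inn}(C)}$ and makes $\widetilde{S}$ automatically $K$-invariant. Lemma \ref{lem:equivariance of fiber} applied with $g=k$ fixing $h$ shows that $K$ preserves each Hodge piece $V^{p,q}_h$, so the piecewise identity $\widetilde{S} = (-1)^p S^h$ on $V^{p,q}_h$, together with the Hodge-Riemann orthogonality $S^h(V^{p,q}_h,V^{p',q'}_h)=0$ for $(p',q')\neq (p,q)$, transfers $K$-invariance from $\widetilde{S}$ to $S^h$. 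The only mild technical obstacle is verifying the Cartan-involution hypothesis in $\M_\R$ rather than in the derived group $\G_\R$ where it is typically stated; this is a straightforward bookkeeping reduction using that $\M^{\mathrm{der}} = \G$ and that the extra $\mathbb{G}_m$-factor is central.
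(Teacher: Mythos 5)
Your proposal is correct and follows essentially the same route as the paper: apply Proposition \ref{prop:C-polarization} with $C$ a quarter-turn $h(\pm i)$ (the paper takes $C=h(-i)\in\G_\R$, you take $h(i)$ --- the same inner automorphism), identify $(-1)^pS^h$ with $\widetilde{S}$ on each $V^{p,q}_h$ by the same weight computation, and deduce the $K$-invariance from $K\subset K^{\mathrm{Inn}(C)}$. One small correction: the form $S$ supplied by Proposition \ref{prop:C-polarization} is $\G_\R$-invariant rather than $\M_\R$-invariant (the central $\mathbb{G}_m$ scales $V_n$ by $r^{-n}$, so $\M_\R$-invariance is impossible for $n\neq 0$), but $\G_\R$-invariance together with the transitivity of $\G_\R^{+}$ on $D$ is all that your propagation step actually uses.
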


\begin{proof}
Let $C=h(-i) \in \G_\R$, then $C^2=h(-1)$ lies in the center of $\G_\R$ because $\mathrm{Ad}\circ h$ has weight zero. Also we know that $\mathrm{Inn}(C)=\mathrm{Inn}(C^{-1})=\mathrm{Inn}(h(i))$ is a Cartan involution on $\G_\R$\cite[Prop 1.19]{LZ}. Proposition \ref{prop:C-polarization} implies that there is a $C$-polarization $S$ on $V$ so that $\widetilde{S}$ is a positive definite Hermitian form where $\widetilde{S}(v,w)\colonequals S(v,\overline{Cw})$.

For any $p,q$ such that $p+q=n$ we would like to show that on $ V^{p,q}_h$
\[ (-1)^pS^h=\widetilde{S}, \]
which would implies that $(-1)^pS^h$ is positive definite on $\V^{p,q}$. For $w \in V^{p,q}_h$, we have 
\[ Cw = h(-i)\cdot w=(-1)^pi^{-n}w.\]
Therefore
\[ (-1)^pS^h(v,w)=(-1)^pi^{-n}S(v,\overline{w})=S(v,\overline{Cw})=\widetilde{S}(v,w).\]
In particular, $\V$ is polarized by $S|_{V_n}$. Notice that $K$ is the fixed point set of $\mathrm{Inn}(h(i))$, the $K$-invariant part also follows from Proposition \ref{prop:C-polarization}.
\end{proof}

Lastly, we would like to show the following $S^1$-invariance property of the polarization function.
\begin{cor}\label{cor:polarization is $S^1$-invariant}
With the notations in Proposition \ref{prop:polarization}. Let $E$ be the smallest nonzero piece of $\V$ with the Hodge metric $h_E$. Let $w_1,w_2$ be any two vectors in $E_h$ and let $\sigma_1,\sigma_2$ be the two holomorphic sections in $H^0(D,E)$ extending $w_1,w_2$ as in Proposition \ref{prop:equivariant sections}. Denote 
\[ (\sigma_1,\sigma_2)_{h'}\colonequals h_E(\sigma_1(h'),\sigma_2(h')). \]
Then the polarization function is $S^1$-invariant: for $h'\in D$ and $z\in S^1$ we have
\[ (\sigma_1,\sigma_2)_{z\cdot h'} = (\sigma_1,\sigma_2)_{h'} \]
where $z\cdot h'$ is induced by $u:S^1 \to K$ as in Lemma \ref{lem:associated objects}.
\end{cor}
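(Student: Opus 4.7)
The strategy is to reduce the identity to the $K$-invariance of the polarization form $S^h$ established in Proposition \ref{prop:polarization}. First I would choose $g \in \G_\R^{+}$ with $h' = g \cdot h$, so that Proposition \ref{prop:equivariant sections} gives $\sigma_i(h') = \rho(\zeta(g))(w_i)$, where $\zeta(g) \in P_{+}$ is the Harish-Chandra parameter of $g$. Since $u(z) \in K$ fixes $h$, the point $z \cdot h'$ equals $u(z)\, g\, u(z)^{-1} \cdot h$, and the identity $\zeta(k g k^{-1}) = k \zeta(g) k^{-1}$ for $k \in K$ derived inside the proof of Lemma \ref{lem:S^1 action} would then yield
\[ \sigma_i(z \cdot h') = \rho(u(z))\, \rho(\zeta(g))\, \rho(u(z))^{-1}(w_i). \]

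Next I would use that $E$ is the \emph{smallest} nonzero piece, so that $E_h$ is a single Hodge component $V^{p_0,q_0}_h$ with $p_0+q_0=n$. Identifying $U(1) \subset \mathbb{S}$ with its image under $h$ so that $\rho \circ u$ agrees with $\rho \circ h|_{U(1)}$, the defining equation of $V^{p,q}_h$ specializes for $z \in S^1$ (where $\bar z = z^{-1}$) to $\rho(u(z))\, v = z^{q-p} v$ on $V^{p,q}_h$. In particular $\rho(u(z))^{-1} w_i = z^{p_0-q_0} w_i$, and carrying this scalar through the previous display gives
\[ \sigma_i(z \cdot h') = z^{p_0 - q_0}\, \rho(u(z))\, \sigma_i(h'). \]

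Finally I would plug this into $h_E$. The two scalar prefactors combine into $|z|^{2(p_0-q_0)} = 1$ since $|z|=1$, leaving
\[ (\sigma_1,\sigma_2)_{z \cdot h'} = h_E\bigl(\rho(u(z))\sigma_1(h'),\ \rho(u(z))\sigma_2(h')\bigr). \]
Because $h_E = (-1)^{p_0} S^h|_E$ and $S^h$ is $K$-invariant by Proposition \ref{prop:polarization}, applying this invariance to $u(z) \in K$ identifies the right-hand side with $h_E(\sigma_1(h'),\sigma_2(h')) = (\sigma_1,\sigma_2)_{h'}$. The only real obstacle is lining up the conventions between $u\colon S^1 \to K$ and $h|_{U(1)}$, which is what allows the weight $z^{q_0-p_0}$ on $E_h$ to be read off directly from the Hodge bigrading; once that identification is in place, everything else is a direct assembly of results already proved in the excerpt.
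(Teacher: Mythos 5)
Your proposal is correct and follows essentially the same route as the paper: both use the identity $\zeta(kgk^{-1})=k\zeta(g)k^{-1}$ from the proof of Lemma \ref{lem:S^1 action} to get $\sigma_i(z\cdot h')=\rho(u(z))\rho(\zeta(g))\rho(u(z))^{-1}w_i$, then exploit that $E_h$ is a single Hodge component (so $\rho(u(z))^{-1}$ acts on $w_i$ by a unimodular scalar) together with the $K$-invariance of $h_E=(-1)^pS^h$ from Proposition \ref{prop:polarization}. The only difference is the order in which you apply the scalar evaluation and the $K$-invariance, which is immaterial.
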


\begin{proof}
Let $g\in \G_\R$ be an element so that $h'=g\cdot h$. As in the construction of Proposition \ref{prop:equivariant sections}, let $\zeta(g)\in P_{+}$ be the element so that
\[\sigma_1(h')=\zeta(g) \cdot w_1, \quad \sigma_2(h')=\zeta(g) \cdot w_2.\]
Since $h(z) \in K$, by the proof of Lemma \ref{lem:S^1 action} we have
\[ \sigma_1(z\cdot h')= z\zeta(g)z^{-1}\cdot w_1, \quad \sigma_2(z\cdot h')= z\zeta(g)z^{-1}\cdot w_2.\]
Since $E$ is the lowest piece, write $E=\V^{p,q}$. Then $h_E=(-1)^pS^h$, which is $K$-invariant by Proposition \ref{prop:polarization}. For any $z\in S^1$,
\begin{align*}
(\sigma_1,\sigma_2)_{z\cdot h'}
                   &=(z\zeta(g)z^{-1}\cdot w_1,z\zeta(g)z^{-1}\cdot w_2) \\
                   &=(\zeta(g)\cdot (z^{-1}\cdot w_1), \zeta(g)\cdot (z^{-1}\cdot w_2)) \quad \quad [\text{$K$-invariance}] \\
                   &=(\zeta(g)\cdot (z^p\overline{z}^{q}w_1), \zeta(g)\cdot (z^p\overline{z}^{q}w_2)) \quad \quad [w_1,w_2\in V^{p,q}_h]\\
                   &=(\zeta(g)\cdot w_1,\zeta(g)\cdot w_2) \quad \quad [z\in S^1] \\
                   &=(\sigma_1,\sigma_2)_{h'}
\end{align*}
Therefore the polarization function is $S^1$-invariant.
\end{proof}

\begin{remark}
One can argue more generally for $w_1,w_2\in V^{r,s}_h$, the function $(-1)^rS^h(\sigma_1,\sigma_2)$ is $S^1$-invariant. But since $\sigma_1,\sigma_2$ are not sections of $\V^{r,s}$, hence
$h_\V(\sigma_1,\sigma_2) \neq (-1)^rS^h(\sigma_1,\sigma_2)$.
\end{remark}

\section{Proof of Theorem \ref{thm:main}}\label{sec:proof}

Let $D$ be a Hermitian symmetric domain. Let $o\in D$ be a reference point. Now we are in the Set-up \ref{setup}, where $\G$ is the simply connected real algebraic group associated to $D$ and $h:\mathbb{S} \to \M$ is the unique algebraic homomorphism associated to $o$. We identify $D$ with the $\M_\R^{+}$-conjugacy class of $h$ and $o$ is identified with $h$.

Let $\rho: \M \to \mathrm{GL}(V)$ be a real algebraic representation. Fix an integer $n$, by Theorem \ref{thm:representation give VHS} and Proposition \ref{prop:polarization}, let $\V=(V_n)_\C \otimes \mathcal{O}_D$ be variation of real Hodge structures of weight $n$. Let $E$ be the smallest nonzero piece in the Hodge filtration of $\V$ equipped with the Hodge metric $h_E$. The Harish-Chandra embedding realize $D$ as a bounded symmetric domain with the induced standard Lebesgue measure. Here we are in the set-up of Problem \ref{problem:minimal extension}. 

Let $v_h \in E_h$ be a vector and let $\sigma$ be the holomorphic section in $H^0(D,E)$ constructed by Proposition \ref{prop:equivariant sections} satisfying $\sigma(h)=v_h$. Note that $\sigma=i_{\mathrm{HC}}^{\ast}\widetilde{\sigma}$ and $\rho$ is algebraic, the polarization function $h_E(\sigma,\sigma)$ can be extended to a continuous function over the closure of the bounded symmetric domain. In particular, we have  $\norm{\sigma}^2<\infty$. We would like to show $\sigma$ is the $L^2$-minimal section using Lemma \ref{lem:minimal criterion}.

Assume the rank of $E$ is $r$ and let $\{ v_1,\ldots,v_r\}$ be a basis of $E_h$. We extend them to holomorphic sections $\{ e_1,\ldots,e_r \} \subset H^0(D,E)$ by Proposition \ref{prop:equivariant sections}. This is a holomorphic frame of $E$ because $\rho$ preserves the linear independence. Let $j$ be an integer with $1\leq j \leq r$ and let $f$ be a holomorphic function on $D$ vanishing at $h$ such that $\norm{fe_j}^2 < \infty$. By Lemma \ref{lem:minimal criterion}, it suffices to show that 
\begin{eqnarray}\label{eqn:vanishing}
(fe_j, \sigma)_{L^2}=\int_D h_E(fe_j, \sigma)d\mu=0
\end{eqnarray}
where $\mu$ is the standard Lebesgue measure on $D$ induced from the Harish-Chandra embedding.

The vanishing of the integral (\ref{eqn:vanishing}) follows from a symmetry argument. By Lemma \ref{lem:associated objects}, we have an embedding $u:S^1 \to K$. For $z\in S^1$, we will denote the action by $h(z)\cdot$ on different objects. 
\begin{enumerate}

\item By Corollary \ref{cor:polarization is $S^1$-invariant},
\[ h(z)\cdot h_E(e_j,\sigma) = h_E(e_j,\sigma).\]

\item Let $t_1,\ldots, t_k$ be a holomorphic coordinate on $D$ induced from $\g^{-1,1}_\C$ via the Harish-Chandra embedding. Then by Lemma \ref{lem:S^1 action}, 
\[ h(z)\cdot t_m= z^2\cdot t_m, \quad h(z)\cdot d\mu=d\mu \]
for each $1\leq m \leq k$. 
\item Consider the Taylor expansion of $f$ at $0$:
\[ f= \sum_{\alpha \geq 1} f^{\alpha},\]
where $f^{\alpha}$ is a homogeneous polynomial in $t_1,\ldots,t_k$ of degree $\alpha$. Then
\[ h(z) \cdot f^{\alpha} = z^{2\alpha}f^{\alpha}.\]
\end{enumerate}

Now suppose $W\subset D$ is a compact subset such that $W$ is $S^1$-invariant where 
\[ h(z)\cdot W=W\]
for each $z\in S^1$. Applying the change of variable for integrals induced by the action $h(z)$ on $W$, we have 
\begin{align*}
\int_W f^{\alpha}h_E(e_j,\sigma) d\mu&=\int_{h(z)\cdot W} [h(z)\cdot f^{\alpha}] [h(z)\cdot h_E(e_j,\sigma)] d\mu \\
&= \int_{W} (z^{2\alpha}f^{\alpha})h_E(e_j,\sigma) d\mu \\
&=z^{2\alpha}\int_W f^{\alpha}h_E(e_j,\sigma) d\mu.
\end{align*}
Since for each $\alpha$, we can choose some $z \in S^1$ so that $z^{2\alpha}\neq 1$, it follows that
\[ \int_W f^{\alpha}h_E(e_j,\sigma) d\mu = 0. \]
Since $W$ is compact, we can approximate $f$ with $\sum^m_{\alpha \geq 1}f^{\alpha}$ and get
\begin{eqnarray}\label{eqn: vanishing on $S^1$-invariant subsets}
 \int_W h_E(fe_j,\sigma) d\mu=0.
\end{eqnarray}
Now let $\{W_{\ell}\}$ be an exhaustion of $D$. We set
\[ W'_{\ell} = \cup_{z\in S^1} h(z)\cdot W_{\ell}. \]
Since $S^1$ is compact, $W'_{\ell}$ is a $S^1$-invariant compact subset of $D$ and vanishing (\ref{eqn: vanishing on $S^1$-invariant subsets}) implies that
\[ \int_{W'_{\ell}} h_E(fe_j,\sigma) d\mu=0.\]
Since $\{W'_{\ell}\}$ is also an exhaustion of $D$ and $(fe_j,\sigma)_{L^2}<\infty$, we have
\[ (fe_j,\sigma)_{L^2}= \int_D h_E(fe_j,\sigma) d\mu=0.\]
This implies (\ref{eqn:vanishing}) and finishes the proof of Theorem \ref{thm:main}.

\section{Example: universal family of elliptic curves}\label{sec:elliptic curve example}

In this section, we would like to discuss the $L^2$-minimal extensions for the universal family of elliptic curves over the upper half plane. We will construct the minimal extensions using the recipe described in the previous sections. Similar calculations can be done for universal families of principally polarized abelian varieties or K3 surfaces. 

Let $\H$ be the upper half plane, then $\H$ is a Hermitian symmetric domain with respect to the Poincar\'e metric. We fix $i \in \H$. 

\textbf{Step 1}. First let us work out the Set-up \ref{setup}. Since $\Hol^{+}(\H)=\mathrm{PSL}(2,\R)$. The associated real algebraic group is $\G_\R=\mathrm{SL}(2,\R)$ which acts on $z\in \H$ by the Mobius transformation
\[ \left(\begin{matrix} a & b \\ c & d \end{matrix}\right) \cdot z = \frac{az+b}{cz+d}.\]
The stabilizer $K$ is $\mathrm{SO}(2,\R)\cong U(1)$ and the algebraic homomorphism is
\[ u: U(1) \mapsto \mathrm{SO}(2,\R), z=a+ib \mapsto \left(\begin{matrix} a & b \\ -b & a \end{matrix}\right).\]
The group $\M_\R$ is $\mathbb{G}_m\times \mathrm{SL}(2,\R)/\langle -1\times -\Id \rangle \cong \{ A\in\mathrm{GL}(2,\R) \colon \det A>0\}$.
The algebraic homomorphism associated to $i$ is
\[ h: \mathbb{S} \to \M, z=a+ib \mapsto \left[\left(\begin{matrix} a & b \\ -b & a \end{matrix}\right)\right]. \] 
Hence one can identify $\H$ with the $\mathrm{GL}^{+}(2,\R)$-conjugacy class of $h$ and $i$ is identified with $h$. Since $\mathbb{G}_m$ fixes $h$, $\H$ is actually the $\mathrm{SL}^{+}(2,\R)$-conjugacy class.

\textbf{Step 2}. The next step concerns about Harish-Chandra embedding. The Lie algebra of $\mathrm{SL}(2,\R)$ is $\mathfrak{sl}(2,\R)$, whose complexification is $\g_\C=\mathfrak{sl}(2,\C)$. Now for $X=\left(\begin{matrix} x & y \\ z & -x \end{matrix}\right)$, and we have
\begin{align*}
(\mathrm{Ad}\circ h)(re^{i\theta})(X)&=r\left(\begin{matrix} \cos \theta & \sin \theta \\ -\sin\theta & \cos\theta \end{matrix}\right) \cdot X \cdot r^{-1}\left(\begin{matrix} \cos \theta & -\sin \theta \\ \sin\theta & \cos\theta \end{matrix}\right)\\
&=\left(\begin{matrix} x\cos 2\theta+\frac{y+z}{2}\sin2\theta & y\cos^2\theta-z\sin^2\theta-x\sin2\theta \\ z\cos^2\theta-y\sin^2\theta-x\sin2\theta & -x-\frac{y+z}{2}\sin2\theta \end{matrix}\right).
\end{align*}
Hence the Hodge decomposition of $\g_\C$ associated to $\mathrm{Ad}\circ h$ is
\[ \g^{0,0}_{\C}=\C \left(\begin{matrix} 0 & 1 \\ -1 & 0 \end{matrix}\right), \g^{-1,1}_{\C} =\C \left(\begin{matrix} 1 & i \\ i & -1 \end{matrix}\right), \g^{1,-1}_{\C}=\C \left(\begin{matrix} 1 & -i \\ -i & -1 \end{matrix}\right) .
\]
Then the Harish-Chandra embedding is given by
\[ i_{\mathrm{HC}}: \H \hookrightarrow \g^{-1,1}_{\C}, \quad  \tau \mapsto \frac{\tau-i}{\tau+i}\left(\begin{matrix} 1/2 & i/2 \\ i/2 & -1/2 \end{matrix}\right). \]
\begin{remark}
It is easy to see this in the disk model of the upper half plane (see \cite[(3.36)]{Viviani}) and then transform it back.
\end{remark}

\begin{remark}\label{rewrite Harish-Chandra}
Identifying $\g^{-1,1}_\C$ with $\C$ via $\left(\begin{matrix} 1/2 & i/2 \\ i/2 & -1/2 \end{matrix}\right) \mapsto 1$, the Harish-Chandra embedding becomes
\[ \H \to \C, \tau \mapsto \frac{\tau -  i }{(\tau +i) }, \]
whose image is $\Delta$. Now $\mathrm{SL}(2,\C)$ acts via the Mobius transformation on $\mathbf{CP}^1$. With respect to this action, we claim that $\mathrm{Exp}(i_{\mathrm{HC}}(\H))\cdot i=\H$. 

Let $\tau \in \H$ be any point. Set $t=\frac{\tau -  i }{(\tau +i)}$. Then
\begin{align*}
e^{i_{\mathrm{HC}}(\tau)}\cdot i &= \left[\Id +  \frac{\tau -  i }{\tau +i }\left(\begin{matrix} 1/2 & i/2 \\  i/2 & -1/2 \end{matrix}\right)\right] \cdot i \\
&=\left(\begin{matrix} 1+\frac{t}{2} & it/2 \\  it/2 & 1-\frac{t}{2}\end{matrix}\right)\cdot i =\tau.
\end{align*}
\end{remark}

\textbf{Step 3}. Now let $V=\R^2$ and we consider the standard representation of $\mathrm{SL}(2,\R)$ on $V$ by left multiplications. Since $-\Id$ acts on $V$ as $-1$, one can define the representation
\[ \rho: \M_\R=\mathbb{G}_m\times \mathrm{SL}(2,\R)/\langle -1\times -\Id \rangle \to \mathrm{GL}(V)\] 
where $\lambda\Id$ acts as $\lambda^{-1}$. In particular, $V$ is a real Hodge structure of pure weight $1$ with respect to $\rho \circ h$. The Hodge decomposition of $V_h$ is
\begin{align*}
V^{1,0}_h &=\{ v\in V \colon \rho(h(z))(v)=z^{-1}v \}=\C\left(\begin{matrix} i \\ 1 \end{matrix}\right), \\
V^{0,1}_h &=\{ v\in V \colon \rho(h(z))(v)=zv \}=\C\left(\begin{matrix} -i \\ 1 \end{matrix}\right).
\end{align*}
Varying $h'$ in the $\mathrm{GL}^{+}(2,\R)$-conjugacy class of $h$ give rise to $\V^{1,0}$ and $\V^{0,1}$, which are Hodge bundles of $\V=V\otimes \mathcal{O}_\H$. Here the smallest nonzero piece in the Hodge filtration of $\V$ is $E=\V^{1,0}$. $\V$ is polarized by the bilinear form $S: V \times V \to \R$, which corresponds to the matrix $\left(\begin{matrix} 0 & -1 \\ 1 & 0 \end{matrix}\right)$. 

Let $X_i=\C/(\mathbb{Z}+i\mathbb{Z})$ be the standard elliptic curve. Then $H^1(X_i,\R)$ is a real Hodge structure of weight one, which is generated by $dx$ and $dy$. Then we have the isomorphism of polarized real Hodge structures
\[ T: H^1(X_i,\R) \to V, \quad dx \mapsto \left(\begin{matrix} 0 \\ 1 \end{matrix}\right), \quad dy \mapsto \left(\begin{matrix} 1 \\ 0 \end{matrix}\right). \]
In particular, we have
\[ T_\C: H^1(X_i,\C) \to V_\C, \quad dz \mapsto \left(\begin{matrix} i \\ 1 \end{matrix}\right), \quad d\overline{z} \mapsto \left(\begin{matrix} -i \\ 1 \end{matrix}\right) \]
where $H^1(X_i,\C)$ is generated by $dz=dx+idy$ and $d\overline{z}=dx-idy$. 

Let $\pi: X \to \H$ be the universal family of elliptic curves. We have $R^2\pi_{\ast}(\C)$ is isomorphic to $\V$ as a polarized variation of real Hodge structures of weight one. Here $\pi_{\ast}(\omega_{X/\H}) \cong \V^{1,0}=E$.

\textbf{Step 4}. Given $v_h \in V^{1,0}_h$, we would like to construct the $L^2$-minimal holomorphic section of $H^0(\H,E)$. Since the dimension of $V^{1,0}_h$ is one, we can assume $v_h=\left(\begin{matrix} i \\  1 \end{matrix}\right)$. Let $\tau \in \H$ be any point. From \textbf{Step 2} we see that $i_{\mathrm{HC}}(\tau)=t\left(\begin{matrix} 1/2 & i/2 \\ i/2 & -1/2 \end{matrix}\right)$ and 
\[ e^{i_{\mathrm{HC}}(\tau)} = \Id +t \left(\begin{matrix} 1/2 & i/2 \\ i/2 & -1/2 \end{matrix}\right)=\left(\begin{matrix} 1+\frac{t}{2} & it/2 \\  it/2 & 1-\frac{t}{2} \end{matrix}\right),\]
where $t =\frac{\tau-i}{\tau+i}$,
Then we define $\sigma \in H^0(\H, E)$ as in Proposition \ref{prop:equivariant sections} by
\[ \sigma(\tau) \colonequals \left(\begin{matrix} 1+\frac{t}{2} & it/2 \\  it/2 & 1-\frac{t}{2} \end{matrix}\right) \cdot \left(\begin{matrix} i \\ 1 \end{matrix}\right)=\left(\begin{matrix} i(1+t) \\  1-t \end{matrix}\right)=\frac{2i}{\tau+i}\left(\begin{matrix} \tau \\  1 \end{matrix}\right). \]
Clearly, this is a holomorphic section of $\V$. Geometrically, 
\[ \sigma(\tau)=\frac{2i}{\tau+i}(dx+\tau dy)=\frac{2i}{\tau+i}(dz)_{\tau},\]
as we mentioned in the introduction. One also see that
\[ \frac{\norm{\sigma(\tau)}^2}{\norm{v_h}^2} = \left|\frac{2i}{\tau+i}\right|^2 \cdot \mathrm{Im}\tau=\abs{1-t}^2\cdot \mathrm{Re}\left(\frac{1+t}{1-t}\right)=1-\abs{t}^2.\]
Therefore
\[ \frac{\norm{\sigma}^2}{\norm{v_h}^2}=\int_\H (1-\abs{t}^2)\phi^{\ast}(d\mu) =\int_{\Delta} (1-\abs{t}^2) d\mu=\frac{\pi}{2}.\]
Here $\phi:\H \to \Delta, \tau \mapsto \frac{\tau-i}{\tau+i}$ is the Harish-Chandra embedding by Remark \ref{rewrite Harish-Chandra} and $d\mu$ is the standard Lebesgue measure on $\Delta$.

\textbf{Step 5}. Lastly, we want to verify that $\sigma$ is indeed a holomorphic section of $E=\V^{1,0}$. Let $\tau=a+ib \in \H$. Consider the matrix
\[ A=\left(\begin{matrix} \sqrt{b} & a/\sqrt{b} \\ 0 & 1/\sqrt{b} \end{matrix}\right) \in \mathrm{SL}^{+}(2,\R),\]
and
\[ A\cdot i =a+ib=\tau.\]
In particular, let $h'$ be a point in the $\mathrm{SL}^{+}(2,\R)$-conjugacy class of $h$ corresponding to $\tau$, then
\[ V^{1,0}_{h'} = \C A\left(\begin{matrix} i \\ 1 \end{matrix}\right) =\C \left(\begin{matrix} \tau \\ 1 \end{matrix}\right).\]
Hence we see that $\sigma(h')=\sigma(\tau)=\frac{2i}{\tau+i}\left(\begin{matrix} \tau \\  1 \end{matrix}\right)\in V^{1,0}_{h'}$.

\bibliographystyle{amsalpha}
\bibliography{SOT}{}
\end{document}